\newcommand*{\mailto}[1]{\href{mailto:#1}{\nolinkurl{#1}}}
\newcommand{\beq}{\begin{equation}}
	\newcommand{\eeq}{\end{equation}}
\newcommand{\ba}{\begin{align}}
	\newcommand{\ea}{\end{align}}
\renewcommand{\Re}{\text{\rm Re}}
\renewcommand{\Im}{\text{\rm Im}}
\numberwithin{equation}{section}
\newtheorem{theorem}{Theorem}[section]
\newtheorem{lemma}[theorem]{Lemma}
\newtheorem{corollary}[theorem]{Corollary}
\newtheorem{ip}[theorem]{Inverse Problem}
\theoremstyle{definition}
\newtheorem{proposition}[theorem]{Proposition}
\newtheorem{example}[theorem]{Example}
\begin{document}
	
	
	\title[Solving inverse problems by the method of spectral mappings ]
	{Solving Barcilon's inverse problems by the method of spectral mappings }
	
	\author[A.~W.~Guan]{AI-WEI GUAN}
	\address{Department of Mathematics, School of Mathematics and Statistics, Nanjing University of
		Science and Technology, Nanjing, 210094, Jiangsu, People's
		Republic of China}
	\email{\mailto{guan.ivy@njust.edu.cn}}
	
	\author[C.~F.~Yang]{CHUAN-FU Yang}
	\address{Department of Mathematics, School of Mathematics and Statistics, Nanjing University of
		Science and Technology, Nanjing, 210094, Jiangsu, People's
		Republic of China}
	\email{\mailto{chuanfuyang@njust.edu.cn}}
	
	\author[N.~P.~Bondarenko]{NATALIA P. BONDARENKO}
	\address{S.M. Nikolskii Mathematical Institute, Peoples' Friendship University of Russia (RUDN University), 6 Miklukho-Maklaya Street, Moscow, 117198, Russian Federation}
	\email{\mailto{bondarenkonp@sgu.ru}}

	\subjclass[2020]{34A55, 34B24, 47E05}
	\keywords{The fourth-order differential operator, distribution coefficients, inverse spectral problem,  uniqueness.}
	\date{\today}
	
	\begin{abstract}
		{
			In this paper, we consider Barcilon's inverse problem, which consists in the recovery of the fourth-order differential operator from three spectra. We obtain the relationship of Barcilon's three spectra with the Weyl-Yurko matrix. Moreover, we prove the uniqueness theorem for the inverse problem solution by developing the ideas of the method of spectral mappings. Our approach allows us to obtain the result for the general case of complex-valued distributional coefficients. In the future, the methods and the results of this paper can be generalized to differential operators of orders greater than $4$ and used for further development of the inverse problem theory for higher-order differential operators.
		}
	\end{abstract}
	
	\maketitle
	
	\section{Introduction}
	This paper deals with an inverse spectral problem for the fourth-order operators generated by the following differential expression:
	\begin{equation}\label{ident2}
		l(y)=y^{(4)}-(py')' +qy, \quad x \in (0,1),
	\end{equation}
	with distribution coefficients $p\in W_{2}^{-1}(0,1)$ and $q\in W_{2}^{-2}(0,1)$. The derivatives in (\ref{ident2}) are understood in the sense of distributions.
	
	Inverse problems of spectral analysis consist in the recovery of operators from their spectral characteristics. The most complete results in the inverse problem theory were obtained for the second-order Sturm-Liouville operators $-y'' + q(x) y$ (see, e.g., the monographs by Levitan \cite{Lev87}, Marchenko \cite{Mar86}, Freiling and Yurko \cite{FY01}, Kravchenko \cite{Krav20}, and references therein). In particular, Borg \cite{Bor} in 1946 has proved that the potential $q(x)$ of the Sturm-Liouville equation
	$$
		-y'' + q(x) y = \lambda y, \quad x \in (0, 1),
	$$
	is uniquely specified by two spectra corresponding to different sets of boundary conditions, say, $y(0) = y(1) = 0$ and $y'(0) = y(1) = 0$. A constructive method for solving the inverse Sturm-Liouville problem was suggested by Gelfand and Levitan \cite{GL51}. However, the Gelfand-Levitan method appeared to be ineffective for differential operators of orders greater than $2$. Thus, the higher-order differential operators are fundamentally more difficult for investigation than the second-order ones, so the inverse spectral theory for higher orders still contains many open problems.
	
	In 1974, Barcilon \cite{Bar1, Bar2} studied an inverse problem for the fourth-order differential equation
	\begin{equation} \label{eqv-Bar}
		y^{(4)} - (p y')' + q y = \lambda y, \quad x \in [0,1],
	\end{equation}
	where the functions $p(x)$ and $q(x)$ are real-valued, $p(x)$ is differentiable, and $\lambda$ is the spectral parameter. Barcilon's problem consists in the reconstruction of the functions $p$ and $q$ from three spectra and the value $p(0)$. As an example, one can use the spectra of the boundary value problems for equation \eqref{eqv-Bar}
	subject to the following boundary conditions:
\begin{equation}\label{four}
	\begin{split}
	& y(0) = y'(0) = 0, \quad y(1) = y''(1) = 0, \\
	& y(0) = y''(0) = 0, \quad y(1) = y''(1) = 0, \\
	& y'(0) = y''(0) = 0, \quad y(1) = y''(1) = 0.		
    \end{split}	
\end{equation}

	Thus, the boundary conditions at $x = 1$ for the three spectra coincide and the boundary condition at $x = 0$ differ. In the first paper \cite{Bar1}, the uniqueness of recovering $p$ and $q$ was proved. In the second paper \cite{Bar2}, Barcilon presented a reconstruction procedure based on iterations. However, the research of Barcilon \cite{Bar1, Bar2} was mostly focused on applications in geophysics, so some aspects in his papers lack mathematical rigor. In particular, Barcilon did not specify to which classes do the functions $p(x)$ and $q(x)$ belong. As far as the authors understand, the uniqueness result of \cite{Bar1} is valid for $p \in W_1^1[0,1]$ and $q \in L_1(0,1)$, and the reconstruction procedure \cite{Bar2} requires a higher order of smoothness. Moreover, Barcilon did not investigate existence and stability of the inverse problem solution, so these issues remain open. 
	
	Spectral theory of the fourth-order differential operators causes interest of scholars because of applications in mechanics, optics, acoustics, and other fields of science (see \cite{Mik, Pol}). In recent years, various spectral theory issues for such operators with non-smooth coefficients were considered in \cite{poly,bada,ugur}. In the recent study \cite{pere}, Perera and B\"ockmann developed a numerical technique for solving Barcilon's problem. Inverse problems  for the fourth-order differential operators in other statements were investigated in \cite{Mc1, Mc2,PK,Cau,Glad,mora,xx}.

	The general theory of inverse spectral problems for the higher-order differential operators
	$$
		y^{(n)} + \sum_{k = 0}^{n-2} p_k(x) y, \quad n > 2,
	$$
	has been constructed by Yurko \cite{Yur,Yur1,Yur2}. In the authors' opinion, the main achievement of Yurko in these studies was the right choice of the spectral characteristic that uniquely determined the operator in the general case, with no restrictions on its spectrum. This characteristic is the Weyl-Yurko matrix $M(\lambda)$, which is a meromorphic triangular matrix-function generalizing the $m$-function of the Sturm-Liouville operators (see, e.g., \cite{Mar86, Yur}). One of this paper's goals is to show that Barcilon's problem, in fact, is a special case of the inverse problem by the Weyl-Yurko matrix. Another important achievement of Yurko \cite{Yur} is the development of the method of spectral mappings. This method can be applied not only to the second-order operators but also to the higher-order ones, so it is more universal than the Gelfand-Levitan method.
				
	In the last 20 years, inverse problem theory has been actively developed for differential operators with distributional coefficients. For the Sturm-Liouville operators, Hryniv and Mykutyuk \cite{Hry1,Hry2,Hry3} transferred the transformation operator method and generalized the basic results of inverse problem theory to distributional potentials of class $W_2^{-1}(0,1)$. Note that the space $W_2^{-1}(0,1)$ contains potentials with singularities such as the Dirac $\delta$-functions, the Coulumb-type singularities $\frac{1}{x}$, etc.
	The method of spectral mappings has been extended to Sturm-Liouville operators with  potentials of $W_2^{-1}(0,1)$ by Freiling et al \cite{Fre2} and by Bondarenko \cite{Bon4,Bon5}. In the last few years, the investigation of higher-order differential operators with distribution coefficients has gradually advanced. Mirzoev and Shkalikov \cite{Mir1,Mir2} have developed a regularization approach to such operators. For the differential equation of arbitrary even order with singular coefficients, Savchuk and Shkalikov \cite{Sav} have constructed the Birkhoff solutions with a certain asymptotic behavior as the spectral parameter tends to infinity. The investigation of inverse spectral problems for higher-order differential operators with distributional coefficients was started by Bondarenko \cite{Bon2, Bon3, Bon1, Bon7}. The papers \cite{Bon2, Bon3} are concerned with the uniqueness theorems for such operators on a finite interval and on the half-line. In \cite{Bon1}, Bondarenko has developed a constructive approach to the recovery of differential operator coefficients from the spectral data by relying on the ideas of the method of spectral mappings. This approach can be applied to various classes of higher-order differential operators with integrable or distributional coefficients. In \cite{Bon7}, by using this approach, the necessary and sufficient conditions on the spectral data of the third-order differential operator with distribution coefficient were obtained.
	
	In this paper, following the ideas of Barcilon \cite{Bar1, Bar2}, we consider the problem of recovering the fourth-order operator \eqref{ident2} from three spectra and investigate the relationship of this problem with the method of spectral mappings \cite{Yur, Bon1}. In contrast to Barcilon's papers \cite{Bar1, Bar2}, we consider the differential expression \eqref{ident2} in a more general situation.
	We suppose that the functions $p(x)$ and $q(x)$ are complex-valued and belong to distribution spaces $W_2^{-1}(0,1)$ and  $W_2^{-2}(0,1)$, respectively. We provide the problem statement for these functional classes in terms of the regularization approach of Mirzoev and Shkalikov \cite{Mir1}. 
	Then, we investigate the relationship of the three Barcilon-type spectra with the Weyl-Yurko matrix. Finally, we prove the uniqueness theorem for the inverse problem solution by using the method of spectral mappings. In the future, our results and methods can be generalized to differential operators of orders greater than $4$ and also can be used for investigation of existence and stability issues.
	
	The paper is organized as follows. In Section~\ref{sec:main}, we describe the regularization of the differential expression \eqref{ident2}, introduce the basic concepts, and provide the main results. Section~\ref{sec:prelim} contains some preliminaries and auxiliary lemmas. In Section~\ref{sec:proofs}, we prove the main theorems. In Section~\ref{sec:different}, we discuss the application of our results to the cases of lower singularity orders.
	
	\section{Main results}\label{sec:main} 
	
	Let us start with the regularization of the differential expression \eqref{ident2}.
	Our treatment of this expression is based on the approach of Mirzoev and Shkalikov \cite{Mir1}. This approach consists in the construction of the matrix function $F(x)=[f_{k,j}(x)]_{k,j=1}^{4}$ which is associated with the differential expression (\ref{ident2}) by a specific rule. This rule depends on the order of the differential equation and on the classes of coefficients.
	
	For the fourth-order differential expression \eqref{ident2}, the associated matrix for the first time appeared in the short note of Vladimirov \cite{Vlad04} and has the form
	\begin{equation}\label{ident3}
		F(x)=\begin{pmatrix} 0 & 1 & 0 & 0\\ -\tau_{2} & \tau_{1} & 1 & 0 \\ \tau_{1}\tau_{2} & -\tau_{1}^{2}+2\tau_{2} & -\tau_{1} & 1 \\ \tau_{2}^{2} & -\tau_{1}\tau_{2} & -\tau_{2} & 0\end{pmatrix},
	\end{equation}
	where $\tau_{1}'=p$, $\tau_{2}''=q$, $\tau_1, \tau_2 \in L_2(0,1)$.
	
	Using the elements of the matrix function $F(x)$ defined by (\ref{ident3}), we introduce the quasi-derivatives
	\begin{equation}\label{ident5}
		y^{[0]}:=y,\quad y^{[k]}=(y^{[k-1]})'-\sum\limits_{j=1}^{k}f_{k,j}y^{[j-1]}, \quad k = \overline{1,4},
	\end{equation}
	and the domain
	\begin{align*}
		\mathcal{D}_{F}:=\{y \colon y^{[k]}\in AC[0,1],\;k=\overline{0,3}\}.
	\end{align*}
	
	The matrix $F(x)$ is constructed in such a way that, for any $y \in \mathcal{D}_{F}$, the expression $l(y)$ produces a regular generalized function and
	the relation $l(y)=y^{[4]}$ holds. In particular, we call the function $y$ a solution of the equation
	\begin{equation}\label{ident4}
		l(y)=\lambda y, \quad x \in (0,1),
	\end{equation}
	where $\lambda$ is the spectral parameter, if $y \in \mathcal{D}_{F}$ and $y^{[4]}=\lambda y$, $x \in (0,1)$.
	
	Proceed to the inverse problem statement. By using the matrix $F(x)$ defined by (\ref{ident3}) and the corresponding quasi-derivatives (\ref{ident5}), we define the linear forms
	\begin{equation}\label{ident6}
		\begin{split}
			&U_{s}(y)=y^{[s-1]}(0), \quad s = \overline{1,4},\\
			&V_{1}(y)=y^{[3]}(1), \quad V_{2}(y)=y^{[1]}(1), \quad
			V_{3}(y)=y^{[2]}(1), \quad V_{4}(y)=y(1).
		\end{split}
	\end{equation}
	
	For $(i,j)\in \{(1,2), (1,3), (2,3)\}$, denote by $\mathfrak{S}_{ij}$ the spectrum of the boundary value problem for equation (\ref{ident4}) subject to the boundary conditions
	\begin{equation} \label{bc-Bar}
	U_{i}(y)=U_{j}(y)=0, \quad V_{3}(y)=V_{4}(y)=0.
	\end{equation}
	
	It can be shown by the standard method that the spectra $\mathfrak S_{ij}$ are countable sets of eigenvalues (see \cite{Bon2}).
	Thus, we consider the boundary conditions similar to the ones that were used by Barcilon \cite{Bar2} and that are provided in the Introduction of this paper. However, the linear forms \eqref{ident6} are chosen for definiteness. They can be replaced by linear forms of other orders. The inverse problem is formulated as follows.
	
	\begin{ip} \label{ip:main}
		Given the spectra $\mathfrak S_{12}$, $\mathfrak S_{13}$, and $\mathfrak S_{23}$, find $p$ and $q$.
	\end{ip}

	Furthermore, we introduce the following coefficient matrices related to the linear forms \eqref{ident6}:
	\begin{equation*}\label{U,V}
		U=I, \qquad
		V=\begin{pmatrix} 0 & 0 & 0 & 1\\ 0 & 1 & 0 & 0 \\ 0 & 0 & 1 & 0 \\ 1 & 0 & 0 & 0\end{pmatrix},
	\end{equation*}
	where $I$ is the $(4 \times 4)$-unit matrix.
	
	Below, we call by the problem $\mathcal L$ the triple $(F(x), U, V)$ and introduce some notations related to this problem.
	
	Denote by $\{C_{k}(x,\lambda)\}_{k=1}^{4}$ , $\{S_{k}(x,\lambda)\}_{k=1}^{4}$, and $\{\Phi_{k}(x,\lambda)\}_{k=1}^{4}$ the solutions of the equation (\ref{ident4}) satisfying the initial conditions
	\begin{equation*}
		U_{s}(C_{k})=\delta_{s,k}, \quad s = \overline{1,4}, 
	\end{equation*}
	\begin{equation}\label{BV}
		V_{s}(S_{k})=\delta_{s,k}, \quad s = \overline{1,4},
	\end{equation}
	and the boundary conditions
	\begin{equation*}
		U_{s}(\Phi_{k})=\delta_{s,k}, \quad s= \overline{1,k}, \quad
		V_{s}(\Phi_{k})=0, \quad s=\overline{k+1,4},
	\end{equation*}
	respectively, where $\delta_{s,k}$ is the Kronecker delta. 
	The solutions $\{C_{k}(x,\lambda)\}_{k=1}^{4}$ , $\{S_{k}(x,\lambda)\}_{k=1}^{4}$ and their quasi-derivatives are entire in $\lambda$ for each fixed $x \in [0,1]$, and $\{\Phi_{k}(x,\lambda)\}_{k=1}^{4}$ are called the Weyl solutions of the  problem $\mathcal{L}$.
	
	Introduce the notation $\vec{y}(x)=\text{col}(y^{[0]}(x), y^{[1]}(x), y^{[2]}(x), y^{[3]}(x))$ and the $(4\times 4)$-matrices $C(x,\lambda)=[\vec C_{k}(x,\lambda)]_{k=1}^{4}$, $\Phi(x,\lambda)=[\vec \Phi_{k}(x,\lambda)]_{k=1}^{4}$. It has been shown in \cite{Bon2} that the following relation holds:
	\begin{equation}\label{M'}
		\Phi(x,\lambda)=C(x,\lambda)M(\lambda),
	\end{equation}
	where the matrix function $M(\lambda)$ is called the Weyl-Yurko matrix of the problem $\mathcal{L}$. 
	
	Due to the results of \cite{Bon2}, the Weyl-Yurko matrix $M(\lambda)=[m_{jk}(\lambda)]_{j,k=1}^{4}$ is unit lower-triangular and its non-trivial entries have the form
	\begin{equation}\label{M}
		m_{jk}(\lambda)=-\frac{\Delta_{jk}(\lambda)}{\Delta_{kk}(\lambda)}, \quad 1\leq k \;\textless j \leq 4,
	\end{equation}
	where $\Delta_{kk}(\lambda):=\text{det}[V_{s}(C_{r})]_{s,r=k+1}^{4}$, and $\Delta_{jk}(\lambda)$ is obtained from $\Delta_{kk}(\lambda)$ by the replacement of $C_{j}$ by $C_{k}$. One can easily show that the zeros of the functions $\Delta_{jk}(\lambda)$ coincide with the eigenvalues of the boundary value problems $\mathcal L_{jk}$ for equation (\ref{ident4}) with the following boundary conditions: 
	\begin{equation} \label{bcL}
	U_{\xi}(y)=0, \quad \xi=\overline{1,k-1},j, \quad V_{\eta}(y)=0, \quad \eta=\overline{k+1,4}.
	\end{equation}  
	
	The functions $C_{r}^{[s]}(1,\lambda)$, $r=\overline{1,4}$, $s=\overline{0,3}$, are entire analytic in $\lambda$, so do the functions $\Delta_{jk}(\lambda)$, $1\leq k \leq j \leq 4$. Hence $M(\lambda)$ is meromorphic in $\lambda$, and the poles of the $k$-th column of $M(\lambda)$ coincide with zeros of $\Delta_{kk}(\lambda)$. 
	
	We will say that the problem $\mathcal{L}$ belongs to the class $\mathcal{W}$ if all the zeros of $\Delta_{kk}(\lambda)$ are simple for $k=1,2,3$ and that $\Delta_{kk}(\lambda)$ and $\Delta_{k+1,k+1}(\lambda)$ do not have common zeros for $k=1,2$. Then, in view of (\ref{M}), the poles of $M(\lambda)$ are simple. In this paper, we always assume that $\mathcal{L} \in \mathcal{W}$.
			
	Consider another problem $\tilde{\mathcal{L}}=(\tilde{F}(x), {U}, {V})$, where $\tilde{F}(x)$  is a matrix function of the same form as ${F}(x)$ but with different coefficients $\tilde{\tau}_{1},\tilde{\tau}_{2}\in L_{2}(0,1)$. We agree that, if $\xi$ represents an object related to $\mathcal{L}$, then $\tilde{\xi}$ represents the analogous object related to $\tilde{\mathcal{L}}$. We assume that $\mathcal{L}=(F(x),U , V)\in \mathcal{W}$ and $\tilde{\mathcal{L}}=(\tilde{F}(x), U ,V)\in \mathcal{W}$. The main theorem of this paper is formulated as follows:
	
	\begin{theorem}\label{thm:main}
		If $\mathfrak{S}_{12}=\tilde{\mathfrak{S}}_{12}$, $\mathfrak{S}_{13}=\tilde{\mathfrak{S}}_{13}$, $\mathfrak{S}_{23}=\tilde{\mathfrak{S}}_{23}$, then $\tau_{1}=\tilde{\tau}_{1},\;\tau_{2}=\tilde{\tau}_{2}+c x$, where $c$ is a complex constant, so
		$p=\tilde{p}$ in $W_2^{-1}(0,1)$ and $q=\tilde{q}$ in $W_2^{-2}(0,1)$ Thus, the specification of the three spectra $\mathfrak{S}_{12},\mathfrak{S}_{13},\mathfrak{S}_{23}$ uniquely determines the coefficients $p$ and $q$ of the differential expression $l(y)$.
	\end{theorem}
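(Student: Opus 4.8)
The plan is to reduce Theorem~\ref{thm:main} to the uniqueness of recovery from the Weyl--Yurko matrix, following the method of spectral mappings. I would split the argument into two statements: (i) the three spectra $\mathfrak{S}_{12}$, $\mathfrak{S}_{13}$, $\mathfrak{S}_{23}$ determine the Weyl--Yurko matrix $M(\lambda)$ uniquely; and (ii) if $M(\lambda)=\tilde M(\lambda)$, then $\tau_1=\tilde\tau_1$ and $\tau_2=\tilde\tau_2+cx$ for some complex constant $c$, which by $\tau_1'=p$ and $\tau_2''=q$ yields $p=\tilde p$ in $W_2^{-1}(0,1)$ and $q=\tilde q$ in $W_2^{-2}(0,1)$. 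Throughout I use the standing hypothesis $\mathcal{L},\tilde{\mathcal{L}}\in\mathcal{W}$, so that the poles of $M$ are simple and the characteristic functions below have only simple zeros.

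For step (i) I would first identify the characteristic functions of the three boundary value problems \eqref{bc-Bar}. Expanding a solution in the basis $\{C_k\}$ with $U_s(C_k)=\delta_{s,k}$, the conditions $U_i(y)=U_j(y)=0$ kill the coefficients of $C_i$ and $C_j$, so a nontrivial solution of \eqref{bc-Bar} is a combination of the $C_k$ with $k\notin\{i,j\}$. Hence the eigenvalues of $\mathfrak{S}_{ij}$ are precisely the zeros of the $2\times2$ minor of $[V_s(C_r)]_{s=3,4;\,r=1}^{4}$ formed from the complementary columns: $\mathfrak{S}_{12}$, $\mathfrak{S}_{13}$ are the zero sets of $\Delta_{22}$ and $\Delta_{32}$ of \eqref{M} (columns $3,4$ and $2,4$), while $\mathfrak{S}_{23}$ is the zero set of the minor in columns $1,4$. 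Each of these functions is entire of order $1/4$, so Hadamard's factorization, together with the coefficient-independent leading asymptotics of $C_r^{[s]}(1,\lambda)$, determines each one from its zeros up to a multiplicative constant, and that constant is itself fixed by the same asymptotics; the equalities $\mathfrak{S}_{ij}=\tilde{\mathfrak{S}}_{ij}$ therefore force the three characteristic functions to coincide with their tilded analogues, and in particular $m_{32}=\tilde m_{32}$. The crucial remaining point is to recover $m_{42}=-\Delta_{42}/\Delta_{22}$, which a priori needs the minor in columns $2,3$ that none of the three spectra supplies directly. Here I would exploit the formal bilinear self-adjointness of $l$: the first-order system equivalent to \eqref{ident4} admits a constant nondegenerate $J$ with $C(x,\lambda)^{T}JC(x,\lambda)\equiv J$, so $\det[V_s(C_r)]_{s,r=1}^{4}$ is a nonzero constant. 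Combining the relation $C(1,\lambda)^{T}JC(1,\lambda)=J$ with the Pl\"ucker identities among the $2\times2$ minors of $[V_s(C_r)]$, I would express the missing minor through the three known ones, recover the whole second column, and, using the symmetry that this relation induces on $M$, obtain all nontrivial entries $m_{jk}$, so that $M(\lambda)=\tilde M(\lambda)$.

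For step (ii) I would run the spectral-mapping argument. Setting $P(x,\lambda)=\Phi(x,\lambda)\tilde\Phi(x,\lambda)^{-1}$, relation \eqref{M'} together with $M=\tilde M$ cancels the poles in $\lambda$, so each entry of $P$ is entire. Using Birkhoff-type asymptotics of the Weyl solutions $\Phi_k$, $\tilde\Phi_k$ for fourth-order operators with coefficients in $L_2$, one estimates $P(x,\lambda)$ and $P(x,\lambda)^{-1}$ uniformly as $|\lambda|\to\infty$ in the sectors of the $\lambda$-plane and concludes by Liouville's theorem that $P(x,\lambda)$ is independent of $\lambda$ and of a special triangular form. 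Reading off the transformed quasi-derivatives then identifies $F(x)$ with $\tilde F(x)$ up to the gauge $\tau_2\mapsto\tilde\tau_2+cx$, which leaves $p$ and $q$ unchanged and completes the proof.

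I expect the main obstacle to be step (i): three spectra are, for a generic fourth-order problem, far too little to pin down the six nontrivial entries of $M$, so the entire reduction rests on the self-adjoint structure effectively halving the amount of spectral data required, and on verifying that the minor relations coming from $C^{T}JC\equiv J$ and the Pl\"ucker identities genuinely close. The secondary, technical difficulty is establishing the uniform solution asymptotics for the distributional coefficients $p\in W_2^{-1}(0,1)$, $q\in W_2^{-2}(0,1)$ that are needed both to fix the Hadamard constants in step (i) and to justify the Liouville step in step (ii).
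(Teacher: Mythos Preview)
Your step (i) contains a genuine error: the claim that the three spectra determine the full Weyl--Yurko matrix, i.e.\ $M(\lambda)=\tilde M(\lambda)$, is false. The symmetry relation $M^{T}J_{0}M=J_{0}$ yields only two scalar identities among the six subdiagonal entries, namely $m_{43}=m_{21}$ and $m_{42}-m_{32}m_{21}+m_{31}=0$. Once you recover the second column $m_{32},m_{42}$ (and hence, after an additional interpolation argument, $m_{21}$, $m_{43}$, $m_{31}$), the entry $m_{41}$ remains unconstrained by these relations. Indeed, Theorem~\ref{thm:main222} shows that the three spectra cannot fix $\tau_{2}$, and since changing $\tau_{2}$ to $\tau_{2}+cx$ alters $m_{41}$ by a constant while leaving the spectra invariant, one necessarily has $m_{41}\ne\tilde m_{41}$ in general. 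Your step (ii) then compounds the problem: with $M=\tilde M$ the spectral-mapping matrix $P(x)$ would satisfy $P(0)=I$, and the conjugation $P'+P\tilde F=FP$ would force $\tau_{2}=\tilde\tau_{2}$ exactly, not merely up to $cx$. So the very conclusion you are asked to prove is incompatible with the intermediate claim you rely on.

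The paper's route accepts that $m_{41}$ is not recovered. It first shows $m_{jk}=\tilde m_{jk}$ for $(j,k)\ne(4,1)$, using the symmetry relations together with a residue-plus-Phragm\'en--Lindel\"of interpolation to pass from knowledge of $m_{21}$ on the zero set $\Lambda_{2}$ of $\Delta_{22}$ to the identity $m_{21}\equiv\tilde m_{21}$. It then proves only that the \emph{residues} $m_{41\langle-1\rangle}(\lambda_{n1})$ coincide with their tilded counterparts, which is exactly what is needed to match the weight matrices $\mathcal N(\lambda_{0})=\tilde{\mathcal N}(\lambda_{0})$ at all poles. This weaker input already forces $\mathcal P(x,\lambda)$ to be $\lambda$-independent; computing $\mathcal P(0)$ directly from $M$ and $\tilde M$ then reveals that its $(4,1)$ entry equals the constant $c=\tilde m_{41}-m_{41}$, and the relation $\mathcal P'+\mathcal P\tilde F=F\mathcal P$ produces $\tau_{1}=\tilde\tau_{1}$ and $\tau_{2}'-\tilde\tau_{2}'=-c$, i.e.\ $\tilde\tau_{2}=\tau_{2}+cx$. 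In short, the $cx$ ambiguity is not a gauge freedom that survives $M=\tilde M$; it is precisely the trace of $m_{41}\ne\tilde m_{41}$, and your plan erases the mechanism that produces it.
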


	
	In Theorem \ref{thm:main}, we only have $p=\tilde{p}$, $q=\tilde{q}$. We lose the uniqueness of $\tau_2\in L_2(0,1)$. A natural question is  whether we can determine $\tau_2$ by the three spectra $\mathfrak{S}_{12},\mathfrak{S}_{13},\mathfrak{S}_{23}$. The following theorem gives the negative answer to this question. 	
	
	\begin{theorem}\label{thm:main222}
		If    $\tau_{1}=\tilde{\tau}_{1}$ and $\tau_{2}=\tilde{\tau}_{2}+c x$, where $c \in \mathbb C$ is an arbitrary constant, then $\mathfrak{S}_{12}=\tilde{\mathfrak{S}}_{12}$, $\mathfrak{S}_{13}=\tilde{\mathfrak{S}}_{13}$, $\mathfrak{S}_{23}=\tilde{\mathfrak{S}}_{23}$.
	\end{theorem}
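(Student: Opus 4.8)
The plan is to show that the transformation $\tau_2 \mapsto \tau_2 + cx$ (with $\tau_1$ fixed) leaves the matrix $F(x)$ acting on the equation in a way that preserves all three spectra. The key observation is that $\tau_2 \mapsto \tau_2 + cx$ has $\tilde\tau_2'' = \tau_2'' = q$, so $q$ is unchanged and hence $l(y)$ is the \emph{same} differential expression: the solution spaces of $l(y) = \lambda y$ for $\mathcal{L}$ and $\tilde{\mathcal{L}}$ coincide as functions on $(0,1)$. The only thing that changes is the regularization data, i.e. the matrix $\tilde F(x)$ obtained by replacing $\tau_2$ with $\tau_2 + cx$ in \eqref{ident3}, which in turn changes the quasi-derivatives $y^{[k]}$ and thus the linear forms $U_s, V_s$ defining the boundary conditions \eqref{bc-Bar}.

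First I would compute explicitly how the quasi-derivatives transform under $\tau_2 \mapsto \tau_2 + cx$. Writing $\tilde y^{[k]}$ for the quasi-derivatives built from $\tilde F$, I would substitute $\tilde\tau_2 = \tau_2 + cx$ into \eqref{ident3} and into the recursion \eqref{ident5} to express each $\tilde y^{[k]}$ as a combination of the original $y^{[j]}$, $j \le k$, with coefficients depending polynomially on $c$ and $x$. Since $\tilde f_{k,j}$ differs from $f_{k,j}$ only through the entries containing $\tau_2$, the low-order quasi-derivatives $y^{[0]} = y$ and $y^{[1]}$ are unchanged (the $(2,\cdot)$ row only involves $\tau_1$), while $\tilde y^{[2]}$ and $\tilde y^{[3]}$ pick up correction terms proportional to $c$ that involve lower-order quasi-derivatives.

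Next I would evaluate the resulting relations at the endpoints $x = 0$ and $x = 1$ to track the linear forms. The crucial point is the boundary conditions in \eqref{bc-Bar}: at $x = 1$ we impose $V_3(y) = y^{[2]}(1) = 0$ and $V_4(y) = y(1) = 0$, and at $x = 0$ we impose two of $U_1(y) = y(0)$, $U_2(y) = y^{[1]}(0)$, $U_3(y) = y^{[2]}(0)$. I would check that on the subspace cut out by $V_3(y) = V_4(y) = 0$, the transformed forms $\tilde V_3, \tilde V_4$ and the transformed forms $\tilde U_i, \tilde U_j$ (for $(i,j) \in \{(1,2),(1,3),(2,3)\}$) vanish \emph{exactly when} the original ones do. Concretely, because the correction to $\tilde y^{[2]}$ is a multiple of $c$ times lower-order terms and the correction to $\tilde y^{[3]}$ likewise, and because $\tilde y^{[0]} = y^{[0]}$, $\tilde y^{[1]} = y^{[1]}$, one verifies that the boundary forms relevant to each spectrum define the same eigenvalue problem; hence each $\tilde{\mathfrak{S}}_{ij}$ coincides with $\mathfrak{S}_{ij}$.

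The main obstacle I expect is bookkeeping rather than conceptual difficulty: one must verify that the specific choice of linear forms $V_3 = y^{[2]}(1)$, $V_4 = y(1)$ at the right endpoint and the forms $U_1, U_2, U_3$ at the left endpoint are precisely those for which the $cx$-correction terms either vanish identically or cancel against the imposed boundary conditions. Since the corrections involve $x$, it matters that at $x = 0$ the factor $x$ kills some terms, while at $x = 1$ the surviving corrections must be proportional to forms already set to zero. I would organize this by first showing the unchanged forms ($U_1, U_2$, and $V_4$) are literally equal, then showing the changed forms restricted to the constraint subspace give equivalent boundary conditions, and finally invoking that identical boundary value problems for the identical equation $y^{[4]} = \lambda y$ have identical spectra. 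This also makes transparent why exactly $\tau_2$ (and not $q$) loses uniqueness, matching Theorem~\ref{thm:main}.
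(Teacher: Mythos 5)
Your proposal is correct and takes essentially the same approach as the paper's proof: since $(cx)''=0$ the expression $l(y)=y^{[4]}$ is unchanged, the forms at $x=0$ are unaffected (the $cx$ perturbation vanishes there), and the only correction at $x=1$, namely the multiple of $y(1)$ appearing in $y^{[2]}(1)=y''(1)+\tau_2(1)y(1)-\tau_1(1)y'(1)$, is annihilated by the imposed condition $V_4(y)=y(1)=0$. One minor slip in your justification: $\tilde y^{[1]}=y^{[1]}$ holds because $y^{[1]}$ is determined by the \emph{first} row of $F$, which is constant; the second row of $F$ does contain $-\tau_2$ in its $(2,1)$ entry.
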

	
	\section{Preliminaries} \label{sec:prelim}
	In this section, we provide some preliminaries. We start with the asymptotic properties of the characteristic functions.
	First, consider the sectors
	\begin{equation*}
		\Gamma_{k} :=\left\{\rho\colon \frac{\pi(k-1)}{4}\;\textless\; \arg\rho\;\textless \;\frac{\pi k}{4} \right\}, \quad k=\overline{1,8},
	\end{equation*}
    and the region
    \begin{equation*}
    	\mathcal{G}_{\delta}=\mathcal  G_{\delta, ijk} := \{\rho\in\overline{\Gamma}_{k}:|\rho-\rho_{ij,l}|\geq\delta,\;l \geq 1\}, \quad \delta\textgreater 0,
    \end{equation*}
    where $\{\rho_{ij,l}\}_{l\geq 1}$ are the zeros of $\Delta_{ij}(\rho^4)$ in the $\rho$-plane.
   
	If $\rho$ lies in a fixed sector $\Gamma=\Gamma_{k}$, we denote by $\{\omega_{j}\}_{j=1}^{4}$ the roots of equation $\omega^{4}=1$ which are numbered in the following order 
	\begin{equation*}
		\Re(\rho \omega_{1})\;\textless\;\Re(\rho \omega_{2})\;\textless\;\Re(\rho \omega_{3})\;\textless\;\Re(\rho \omega_{4}), \quad \rho\;\in\;\Gamma.
	\end{equation*}
	
	\begin{proposition}\label{delta}
		(\cite{Bon2}) The following asymptotic relation holds as 
		$|\rho|\rightarrow\infty:$
		\begin{equation*}
			\Delta_{ij}(\lambda)=c_{ij}\rho^{a_{ij}}e^{\rho s_{j}}(1+o(1)), \quad \arg\rho=\phi, \quad 1\leq j < i\leq 4,
		\end{equation*}
		where $\lambda=\rho^4$, $\{\rho:\arg\rho=\phi\}\subset \Gamma$ is a fixed ray, $s_{j}=\sum\limits_{k=j+1}^{4}\omega_{k}$,
		$$a_{ij}=\sum_{k=1}^{j-1}p_{k,0}+\sum_{k=j+1}^{4}p_{k,1}+p_{i,0}-6,$$ $$c_{ij}=(\det\Omega)^{-1}\det[\omega_{k}^{p_{s,0}}]_{k=\overline{1,j},s=\overline{1,j-1},i}\cdot \det[\omega_{k}^{p_{s,1}}]_{s,k=j+1}^{4},$$ $\Omega:=[\omega_{k}^{j-1}]_{j,k=1}^{4}$, $p_{k,0}$ and $p_{k,1}$ are the orders of the linear forms $U_k$ and $V_k$, respectively, for $k = \overline{1,4}$, that is, $p_{1,0}=p_{4,1}=0,\;p_{2,0}=p_{2,1}=1,\;p_{3,0}=p_{3,1}=2,\;p_{4,0}=p_{1,1}=3$.
	\end{proposition}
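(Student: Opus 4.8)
\emph{Proof plan.} The plan is to reduce the determinant $\Delta_{ij}(\lambda)$ to the Birkhoff-type asymptotics of a fundamental system of solutions of $y^{[4]}=\rho^4 y$ and then to extract the leading term by a Cauchy--Binet expansion. First I would invoke the construction of Savchuk and Shkalikov \cite{Sav} (valid for the coefficients $\tau_1,\tau_2\in L_2(0,1)$) to fix, in the fixed sector $\overline{\Gamma}=\overline{\Gamma}_k$, a fundamental system $\{y_t(x,\rho)\}_{t=1}^4$ of $y^{[4]}=\rho^4 y$ whose quasi-derivatives satisfy
\begin{equation*}
y_t^{[m]}(x,\rho)=(\rho\omega_t)^m e^{\rho\omega_t x}\bigl(1+o(1)\bigr),\quad |\rho|\to\infty,\quad m=\overline{0,3},
\end{equation*}
uniformly in $x\in[0,1]$ along the ray $\arg\rho=\phi$.

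Since $U=I$, the solutions $C_r$ are normalized by $\vec C_r(0)=e_r$, so the matrix $C(x,\lambda)=[\vec C_r(x,\lambda)]$ equals $Y(x,\rho)Y(0,\rho)^{-1}$ with $Y(x,\rho)=[\vec y_t(x,\rho)]$. From the asymptotics, $Y(0,\rho)=\mathrm{diag}(\rho^{p_{1,0}},\dots,\rho^{p_{4,0}})\bigl(\Omega+o(1)\bigr)$, which is invertible for large $|\rho|$ because $\Omega$ is a Vandermonde matrix in the distinct nodes $\omega_t$. Writing $A:=Y(0,\rho)^{-1}$ and using $V_s(y)=y^{[p_{s,1}]}(1)$, the entries of the determinant become $V_s(C_r)=\sum_{t=1}^4 y_t^{[p_{s,1}]}(1,\rho)\,A_{t,r}$; that is, the matrix $[V_s(C_r)]_{s,r}$ factors as $BA$, where $B:=[y_t^{[p_{s,1}]}(1,\rho)]_{s,t}$ and $A\approx\Omega^{-1}\mathrm{diag}(\rho^{-p_{1,0}},\dots,\rho^{-p_{4,0}})$.

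Now $\Delta_{ij}(\lambda)$ is the minor of $[V_s(C_r)]$ with row set $S=\{j+1,\dots,4\}$ and column set $J=(\{j+1,\dots,4\}\setminus\{i\})\cup\{j\}$. Cauchy--Binet gives
\begin{equation*}
\Delta_{ij}(\lambda)=\sum_{T}\det(B_{S,T})\,\det(A_{T,J}),
\end{equation*}
the sum running over $(4-j)$-element subsets $T\subseteq\{1,2,3,4\}$. Each $\det(B_{S,T})$ carries the factor $\exp\bigl(\rho\sum_{t\in T}\omega_t\bigr)$, and on the fixed ray the strict ordering $\Re(\rho\omega_1)<\dots<\Re(\rho\omega_4)$ forces the unique dominant choice $T=S=\{j+1,\dots,4\}$, which produces $e^{\rho s_j}$ while every other $T$ is exponentially smaller. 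Pulling the column factors $e^{\rho\omega_t}$ and the row/column powers of $\rho$ out of $\det(B_{S,S})$ and $\det(A_{S,J})$ yields the power $\rho^{\,\sum_{s\in S}p_{s,1}-\sum_{r\in J}p_{r,0}}$. Since $J^c=\{1,\dots,j-1,i\}$, one has $\sum_{r\in J}p_{r,0}=6-\sum_{k=1}^{j-1}p_{k,0}-p_{i,0}$, and the exponent rearranges into the stated $a_{ij}=\sum_{k=1}^{j-1}p_{k,0}+\sum_{k=j+1}^4 p_{k,1}+p_{i,0}-6$. The surviving constant is $\det[\omega_t^{p_{s,1}}]_{s,t=j+1}^4\cdot\det\bigl([\Omega^{-1}]_{S,J}\bigr)$; applying Jacobi's identity for minors of an inverse, $\det([\Omega^{-1}]_{S,J})=\pm(\det\Omega)^{-1}\det[\Omega]_{J^c,S^c}$, and with $S^c=\{1,\dots,j\}$ this complementary minor is exactly $\det[\omega_k^{p_{s,0}}]_{k=\overline{1,j},\,s=\overline{1,j-1},i}$, reproducing $c_{ij}$.

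The main obstacle is analytic rather than combinatorial. One must guarantee that the leading term does not degenerate: the relevant Vandermonde-type minors of $\Omega$ (equivalently, the Birkhoff regularity of the forms $U_s$, $V_s$) must be nonzero, so that the choice $T=S$ genuinely contributes and the subdominant subsets can be discarded. Equally delicate is the uniformity of the $o(1)$ remainders in the fundamental-system asymptotics along the ray: only if these errors are uniform can the product of four such factors collapse back into a single $(1+o(1))$ after the determinant is formed. Once these two points are secured, the bookkeeping of the powers of $\rho$ and of the $\omega_t$-determinants is routine, and the asymptotic formula follows.
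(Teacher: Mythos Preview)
The paper does not prove this proposition; it is quoted verbatim from \cite{Bon2} and stated without argument. Your plan is precisely the standard Birkhoff-theory computation that underlies such results (and is the approach one finds in \cite{Bon2,Bon6,Yur}): build a fundamental system with the exponential asymptotics of \cite{Sav}, write $C(x,\lambda)=Y(x,\rho)Y(0,\rho)^{-1}$, expand the minor $\Delta_{ij}$ by Cauchy--Binet, select the dominant index set $T=\{j+1,\dots,4\}$ via the strict ordering of $\Re(\rho\omega_t)$ inside the open sector, and read off the power of $\rho$ and the Vandermonde-type constant via Jacobi's complementary-minor identity. Your identification of the column set $J$, its complement $J^c=\{1,\dots,j-1,i\}$, and the resulting exponent $a_{ij}$ are all correct.

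Two minor points worth tightening if you write this out in full. First, you leave a $\pm$ in the Jacobi step; the sign is $(-1)^{\sigma(S)+\sigma(J)}$ with $\sigma$ the sum of indices, and one should check it matches the (signless) expression for $c_{ij}$ in the statement --- in practice the column permutation that sends $(j,j+1,\dots,\widehat{i},\dots,4)$ to the natural order contributes a compensating sign. Second, the $o(1)$ in the Savchuk--Shkalikov asymptotics is uniform in $x$ but is stated for $\rho$ in closed subsectors; since the proposition fixes a single ray $\arg\rho=\phi$ strictly inside $\Gamma$, this is exactly what you need, and your caveat about uniformity is already the right one. There is no genuine gap in the plan.
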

    Moreover, considering the sector with boundary, we have the following asymptotic estimates:
    \begin{proposition}\label{delta'}
    	(\cite{Bon6}) For sufficiently large $|\rho|$, the following relations hold:
    	\begin{equation*}
    		\begin{array}{ll}
    			\Delta_{ij}(\lambda)=O(\rho^{a_{ij}}e^{\rho s_{j}}),\quad \rho\in\overline{\Gamma},\\
    			|\Delta_{ij}(\lambda)|\geq C_{\delta}|\rho|^{a_{ij}}e^{Re(\rho s_{j})},\quad\rho\in\mathcal{G}_{\delta},
    		\end{array}
    	\end{equation*}
    	where $\lambda=\rho^4$, $C_{\delta}$ is a constant depending on $\mathcal{G}_{\delta}$, $a_{ij},\;s_{j},\;1\leq j < i\leq 4$ are defined in Proposition \ref{delta}.
    \end{proposition}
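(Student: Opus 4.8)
The plan is to derive both estimates from the Birkhoff-type asymptotics of a fundamental system of the equation $y^{[4]}=\rho^4 y$, of which the ray-wise relation of Proposition~\ref{delta} is a specialization. First I would invoke the existence (from \cite{Sav, Bon2}) of solutions $\{y_k(x,\rho)\}_{k=1}^4$ whose quasi-derivatives satisfy
\begin{equation*}
 y_k^{[\nu]}(x,\rho)=(\rho\omega_k)^{\nu}e^{\rho\omega_k x}\bigl(1+o(1)\bigr),\quad \nu=\overline{0,3},
\end{equation*}
uniformly for $x\in[0,1]$ and $\rho$ in the closed sector $\overline{\Gamma}$ as $|\rho|\to\infty$, with the roots $\omega_k$ numbered by the ordering $\Re(\rho\omega_1)\le\cdots\le\Re(\rho\omega_4)$. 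Expanding each $C_r$ in this basis by matching the conditions $U_s(C_r)=\delta_{s,r}$, and substituting into $\Delta_{ij}(\lambda)$---the determinant $\det[V_s(C_r)]_{s,r=j+1}^4$ with the column $C_i$ replaced by $C_j$---I would expand into a sum of exponential terms $e^{\rho(\omega_{k_1}+\cdots)}$ with coefficients of the form $(\text{const})\cdot\rho^{(\cdots)}$. The dominant exponent is $s_j=\omega_{j+1}+\cdots+\omega_4$, carrying the power $a_{ij}$ and the coefficient $c_{ij}$, which reproduces Proposition~\ref{delta}.

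For the upper bound on $\overline{\Gamma}$, I would note that $\Re(\rho s_j)$ is precisely the sum of the $4-j$ largest values among $\{\Re(\rho\omega_k)\}$, so every exponent appearing in the determinant expansion has real part at most $\Re(\rho s_j)$ throughout the closed sector, including the bounding rays where the ordering degenerates to equalities; moreover the power of $\rho$ attached to each maximal-real-part term does not exceed $a_{ij}$, a fact one checks from the orders $p_{k,0},p_{k,1}$. Termwise estimation then gives $\Delta_{ij}(\lambda)=O(\rho^{a_{ij}}e^{\rho s_j})$ uniformly on $\overline{\Gamma}$.

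The hard part will be the lower bound on $\mathcal{G}_\delta$. In the interior of $\Gamma$ the strict ordering makes the $s_j$-term strictly dominant, so Proposition~\ref{delta} with $c_{ij}\ne 0$ yields $|\Delta_{ij}(\lambda)|\ge\tfrac12|c_{ij}||\rho|^{a_{ij}}e^{\Re(\rho s_j)}$ for large $|\rho|$; in particular $\Delta_{ij}$ has no zeros there, so all zeros $\rho_{ij,l}$ must cluster near the two bounding rays of $\Gamma$, where two consecutive exponents collide and the single-exponential asymptotic breaks down. To handle a wedge-neighborhood of such a ray I would factor out the dominant exponential and write $\Delta_{ij}(\lambda)=\rho^{a_{ij}}e^{\rho s_j}\,h(\rho)$, where $h(\rho)=c_{ij}+(\text{bounded oscillating exponential terms})+o(1)$ is, to leading order, an exponential sum (quasi-polynomial) in $\rho$ along the ray. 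The zeros of such a sum are asymptotically separated by a fixed positive distance, so for each $\delta>0$ a minimum-modulus / Rouch\'e argument on the complement of the disks $\{|\rho-\rho_{ij,l}|<\delta\}$ bounds $|h(\rho)|$ below by a constant $C_\delta>0$, giving $|\Delta_{ij}(\lambda)|\ge C_\delta|\rho|^{a_{ij}}e^{\Re(\rho s_j)}$ there. Combining this with the interior estimate through a decomposition of $\overline{\Gamma}$ into an interior part and neighborhoods of its two bounding rays completes the proof. The delicate point is the uniform separation of the zeros and the uniform lower bound for $h$ near the boundary rays, since there $\Delta_{ij}$ is genuinely a multi-exponential sum rather than a single dominant term.
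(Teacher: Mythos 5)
The paper offers no proof of this proposition at all—it is imported from \cite{Bon6}—and your reconstruction is essentially the argument used in that reference (and in Yurko's treatment of the integrable-coefficient case): Birkhoff-type fundamental solutions on the closed sector, expansion of the characteristic determinants into exponential sums with a common power $\rho^{a_{ij}}$, termwise domination by $e^{\Re(\rho s_j)}$ for the upper bound, and, for the lower bound on $\mathcal{G}_{\delta}$, a dominant-term estimate in the interior combined with a two-term exponential model and a Rouch\'e/minimum-modulus argument in wedges around the bounding rays. The one thing worth adding is that the ``delicate point'' you flag is simpler here than you fear: in the fourth-order case, although two pairs of the $\omega_k$ collide simultaneously on rays such as $\arg\rho=\pi/4$, at most one substitution changes the dominant subset $\{\omega_{j+1},\dots,\omega_4\}$ (the other colliding pair either lies entirely inside or entirely outside it), so near each bounding ray the normalized function is genuinely of the form $c_1+c_2e^{\rho(\omega'-\omega'')}$ plus exponentially small and $o(1)$ terms, whose zeros form a single uniformly separated arithmetic progression; the required nonvanishing $c_{ij}\neq 0$ is already guaranteed by Proposition~\ref{delta}.
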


	Next, following \cite{Bon1}, we define the auxiliary problem $\mathcal{L}^{\star}=(F^{\star}(x),U^{\star},V^{\star})$, which helps us to investigate the structure of the Weyl-Yurko matrix.
	Along with the associated matrix $F(x)$, we consider the matrix function $F^\star(x)=[f^\star_{k,j}(x)]^{4}_{k,j=1}$ whose entries are defined as follows:
	\begin{equation*}\label{ident f^*_{k,j}(x)}
		f^\star_{k,j}(x):=(-1)^{k+j+1}f_{5-j,5-k}(x).
	\end{equation*}
	
	Using $F^{\star}(x)$, we define the quasi-derivatives 
	\begin{equation}\label{identz}
z^{[0]}:=z, \quad z^{[k]}=(z^{[k-1]})'-\sum_{j=1}^{k}f_{k,j}^{\star}z^{[j-1]}, \quad k = \overline{1,4},
\end{equation}
	and the domain
	\begin{equation*}
\mathcal{D}_{F^{\star}}:=\{z:z^{[k]}\in AC[0,1], \quad k= \overline{0,3}\}.
\end{equation*}
	
	Suppose that  $y \in \mathcal{D}_{F}$ and $z \in \mathcal{D}_{F^\star}$, the quasi-derivatives for $y$ are defined via (\ref{ident5}) by using the matrix $F(x)$, the quasi-derivatives for $z$ are defined by \eqref{identz}.
	Define the Lagrange bracket
	\begin{equation} \label{Lagrange}
		\langle z,y\rangle:=zy^{[3]}-z^{[1]}y^{[2]}+z^{[2]}y^{[1]}-z^{[3]}y.
	\end{equation}
	
	It can be easily shown that, if $y$ and $z$ solve the equations $y^{[4]} = \lambda y$ and $z^{[4]} = \mu z$, respectively, then
	\begin{equation} \label{wron}
		\frac{d}{dx} \langle z,y\rangle = (\lambda - \mu) z y.
	\end{equation}
	In particular, if $\lambda = \mu$, then the Lagrange bracket $\langle z,y\rangle$ does not depend on $x$.
	
	Set 
	$$
	\vec{y}(x)=\text{col}(y^{[0]}(x), y^{[1]}(x), y^{[2]}(x), y^{[3]}(x)), \quad\!\!\!\! \vec{z}(x)=\text{col}(z^{[0]}(x), z^{[1]}(x), z^{[2]}(x), z^{[3]}(x)).
	$$

	By using the corresponding quasi-derivatives (\ref{ident5}) and (\ref{identz}). Then, we have the following relation:
	\begin{equation*}\label{ident J}
		\langle z,y\rangle |_{x=a}=[\vec{z}(a)]^{T}J\vec{y}(a),
	\end{equation*}
	where $J:=[(-1)^{k+1}\delta_{k,5-j}]^{4}_{k,j=1}$. 
	
	Along with $U$ and $V$, consider the matrices
	\begin{equation*}\label{ident UV}
		U^{\star}:=[JU^{-1}J_{0}^{-1}]^{T}, \quad V^{\star}:=[JV^{-1}J_{1}^{-1}]^{T},
	\end{equation*}
	where 
	\begin{equation}\label{ident J0J1}
		J_{0}=\begin{pmatrix} 0 & 0 & 0 & 1\\ 0 & 0 & -1 & 0 \\ 0 & 1 & 0 & 0 \\ -1 & 0 & 0 & 0\end{pmatrix}, \quad J_{1}=\begin{pmatrix} 0 & 0 & 0 & -1\\ 0 & 0 & -1 & 0 \\ 0 & 1 & 0 & 0 \\ 1 & 0 & 0 & 0\end{pmatrix},
	\end{equation}
	and the sign $T$ denotes the matrix transpose.
	
	Analogously to the Weyl-Yurko matrix $M(\lambda)$, one can define the matrix $M^{\star}(\lambda)$ for the problem $\mathcal L^{\star} = (F^{\star}(x), U^{\star}, V^{\star})$. In \cite{Bon1}, a relation between the Weyl-Yurko matrices $M(\lambda)$ and $M^{\star}(\lambda)$ has been obtained. However, in our case, calculations show that
	$F(x) = F^\star(x)$, $U = U^\star$, and $V = V^\star$. Hence $M(\lambda)=M^\star(\lambda)$. Consequently, the results of \cite{Bon1} imply the following proposition.
	
	\begin{proposition}
		(\cite{Bon1}). The following relation holds:
		\begin{equation}\label{M*}
			(M(\lambda))^{T}J_{0}M(\lambda)=J_{0}.
		\end{equation}
	\end{proposition}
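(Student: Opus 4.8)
The plan is to reduce the statement to a self-duality identity for the Weyl--Yurko matrix and then verify it by the Lagrange-bracket machinery. Since the preceding discussion gives $F^\star = F$, $U^\star = U$, $V^\star = V$, and hence $C^\star = C$, $\Phi^\star = \Phi$, $M^\star = M$, the general relation between $M(\lambda)$ and $M^\star(\lambda)$ from \cite{Bon1} collapses to $M(\lambda)^T J_0 M(\lambda) = J_0$, which is exactly what must be shown. To prove it from scratch I would introduce the matrix $P(x,\lambda) := \Phi(x,\lambda)^T J_0 \Phi(x,\lambda)$, whose $(j,k)$-entry is $\vec\Phi_j^T J_0 \vec\Phi_k = \langle \Phi_j,\Phi_k\rangle$ by \eqref{Lagrange} (here I use that the matrix $J$ of the Lagrange identity coincides with $J_0$, which is checked by inspection, and I view the first factor as a solution of the $F^\star=F$ equation). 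Because all $\Phi_k$ solve the equation with the same parameter $\lambda$, formula \eqref{wron} with $\lambda=\mu$ shows each entry is independent of $x$, so $P$ is a constant matrix. The proof then amounts to computing this constant in two different ways.

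First evaluation: using $\Phi = CM$ from \eqref{M'}, note that $C(x,\lambda)^T J_0 C(x,\lambda)$ is likewise $x$-independent by \eqref{wron}; evaluating it at $x=0$, where $C(0,\lambda)=I$ because $U=I$, gives $C^T J_0 C \equiv J_0$. Hence $P = (CM)^T J_0 (CM) = M^T (C^T J_0 C) M = M^T J_0 M$.

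Second evaluation: here I would compute $P$ directly from the boundary conditions defining the Weyl solutions. The key tool is the pair of endpoint identities $\langle z,y\rangle|_{x=0} = \vec z(0)^T J_0 \vec y(0)$ (valid since $U=I$ and $J=J_0$) and $\langle z,y\rangle|_{x=1} = V(z)^T J_1 V(y)$, the latter following from the definition $V^\star = [JV^{-1}J_1^{-1}]^T$ together with $V^\star=V$, where $J_1$ is the matrix in \eqref{ident J0J1}. Both $J_0$ and $J_1$ are anti-diagonal, pairing index $s$ with $5-s$. The Weyl conditions impose triangular support patterns: $\vec\Phi_k(0)$ has first $k$ entries equal to $\delta_{s,k}$ ($s=\overline{1,k}$) and is unconstrained afterwards, while the form vector $V(\Phi_k)$ has its last $4-k$ entries equal to zero. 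Matching these supports against the anti-diagonal structure, one finds that for $j+k\le 4$ the $x=1$ expression has no surviving terms, for $j+k\ge 6$ the $x=0$ expression has none, and for $j+k=5$ exactly one term survives at $x=0$, equal to $(J_0)_{jk}$. Therefore $P=J_0$.

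Equating the two evaluations yields $M^T J_0 M = J_0$, as claimed. I expect the main obstacle to be this second evaluation: one must select the correct endpoint for each pair $(j,k)$ so that the known zero-pattern of the boundary data annihilates all the unknown quasi-derivative values, and then track the signs carried by $J_0$ and $J_1$ carefully enough to recover exactly $\pm 1$ on the anti-diagonal. By contrast, the first evaluation and the reduction to the self-dual case are routine once the endpoint pairing identities are established.
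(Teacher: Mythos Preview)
Your argument is correct. Note, however, that the paper does not actually prove this proposition: it merely records that $F^\star=F$, $U^\star=U$, $V^\star=V$ (hence $M^\star=M$) and then invokes the general identity between $M$ and $M^\star$ from \cite{Bon1}. What you have written is a self-contained proof of that cited identity, specialised to the self-dual situation.

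Your route via $P(x,\lambda)=\Phi^T J_0\Phi$ is the natural one and the details check out. In particular, $J=J_0$ as matrices, so the Lagrange bracket reads $\langle z,y\rangle=\vec z^{\,T}J_0\vec y$; since $F^\star=F$ the two sets of quasi-derivatives agree, so the entries $\langle\Phi_j,\Phi_k\rangle$ are well defined and constant in $x$ by \eqref{wron}. The first evaluation is immediate from $C(0,\lambda)=I$. For the second evaluation your support-counting is correct once one writes it carefully: at $x=1$ the identity $V^{-T}JV^{-1}=J_1$ (using $V=V^T=V^{-1}$) gives $\langle\Phi_j,\Phi_k\rangle|_{x=1}=V(\Phi_j)^T J_1 V(\Phi_k)$, and the anti-diagonal pairing $s\leftrightarrow 5-s$ together with $V_s(\Phi_k)=0$ for $s>k$ kills every term when $j+k\le 4$; at $x=0$, for $j+k\ge 5$ the unknown high components $\Phi_j^{[s-1]}(0)$ with $s>j$ are always paired against $\Phi_k^{[4-s]}(0)=\delta_{5-s,k}$ with $5-s<k$, hence zero, and the sole surviving term occurs precisely when $s=j=5-k$, yielding $(J_0)_{jk}$. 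So $P=J_0$ and $M^TJ_0M=J_0$ follows.

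In short: your proof is sound, and goes beyond the paper, which simply cites the result.
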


	In the element-wise form, we obtain the following corollary.
	
	\begin{corollary}\label{real}
		The following relations hold:
		\begin{equation}\label{real1}
			m_{43}(\lambda)={m}_{21}(\lambda),
		\end{equation}
		\begin{equation}\label{real2}
			m_{42}{(\lambda)}-m_{32}(\lambda)m_{21}(\lambda)+m_{31}(\lambda)=0.
		\end{equation}
	\end{corollary}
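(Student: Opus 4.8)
The plan is to expand the matrix identity (\ref{M*}) entry by entry and read off the two claimed scalar relations from a suitable pair of entries. First I would record the explicit shape of the matrices involved. Since $M(\lambda)$ is unit lower-triangular, we may write, suppressing the argument $\lambda$,
\begin{equation*}
	M(\lambda) = \begin{pmatrix} 1 & 0 & 0 & 0 \\ m_{21} & 1 & 0 & 0 \\ m_{31} & m_{32} & 1 & 0 \\ m_{41} & m_{42} & m_{43} & 1 \end{pmatrix},
\end{equation*}
and $J_0$ is given by (\ref{ident J0J1}).

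Next I would compute $J_0 M(\lambda)$, which amounts to a signed permutation of the rows of $M(\lambda)$ dictated by $J_0$, and then left-multiply by $M(\lambda)^T$. A useful structural remark is that $J_0^T = -J_0$, hence $(M^T J_0 M)^T = M^T J_0^T M = -M^T J_0 M$; that is, the left-hand side of (\ref{M*}) is automatically antisymmetric, exactly as $J_0$ is. Consequently the matrix identity (\ref{M*}) is equivalent to comparing only the six entries strictly above the diagonal, while the diagonal entries vanish identically.

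Then I would isolate the entries that carry the desired information. The $(1,3)$ entry of $M(\lambda)^T J_0 M(\lambda)$ evaluates to $m_{43} - m_{21}$, whereas the corresponding entry of $J_0$ is $0$; this yields (\ref{real1}). The $(1,2)$ entry evaluates to $m_{42} - m_{32} m_{21} + m_{31}$, and again the corresponding entry of $J_0$ is $0$, which gives (\ref{real2}). The remaining four strictly upper-triangular identities, coming from entries $(1,4)$, $(2,3)$, $(2,4)$, and $(3,4)$, reduce to $1 = 1$, $-1 = -1$, or $0 = 0$ and carry no new information, so no further work is required.

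There is no genuine obstacle here beyond careful bookkeeping: the only point demanding attention is to track correctly the signs introduced by $J_0$ when forming $J_0 M(\lambda)$, since a sign slip there would corrupt both relations. The built-in antisymmetry of the product $M(\lambda)^T J_0 M(\lambda)$ serves as a convenient internal consistency check that the expansion has been carried out without error.
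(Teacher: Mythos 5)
Your proof is correct and follows essentially the same route as the paper: both substitute the explicit form of the unit lower-triangular $M(\lambda)$ and of $J_0$ into \eqref{M*}, expand the product entrywise, and read off \eqref{real1} and \eqref{real2} from the $(1,3)$ and $(1,2)$ entries. The observation that $M^T J_0 M$ is automatically antisymmetric is a nice consistency check but adds nothing beyond the paper's computation.
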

	\begin{proof}
		We know that $M$ is a unit lower-triangular matrix. Substituting (\ref{ident J0J1}) into (\ref{M*}), we obtain
		\begin{equation*}
			\begin{pmatrix} 0 & m_{31}+m_{42}-m_{21}m_{32} & m_{43}-m_{21} & 1\\ m_{21}m_{32}-m_{31}-m_{42} & 0 & -1 & 0 \\ m_{21}-m_{43} & 1 & 0 & 0 \\ -1 & 0 & 0 & 0\end{pmatrix}\!\!\!=\!\!\!\begin{pmatrix} 0 & 0 & 0 & 1\\ 0 & 0 & -1 & 0 \\ 0 & 1 & 0 & 0 \\ -1 & 0 & 0 & 0\end{pmatrix}.
		\end{equation*}
		
		This yields the relations (\ref{real1}) and (\ref{real2}).\end{proof}
	
	Let us show that the spectra $\mathfrak S_{ij}$ coincide with the zeros of certain characteristic functions.
	
	\begin{lemma}
		The spectra $\mathfrak{S}_{12}$, $\mathfrak{S}_{13}$, $\mathfrak{S}_{23}$  coincide with the zeros of the characteristic functions $\Delta_{22}$, $\Delta_{32}$, $\Delta_{42}$, respectively.
	\end{lemma}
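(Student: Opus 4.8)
\emph{Proof plan.} All three spectra are defined by boundary conditions of the form
\begin{equation*}
U_{i}(y)=U_{j}(y)=0,\qquad V_{3}(y)=V_{4}(y)=0,
\end{equation*}
and I would invoke the already-established fact that the zeros of $\Delta_{j2}(\lambda)$ coincide with the spectrum of the problem $\mathcal{L}_{j2}$, whose boundary conditions by \eqref{bcL} are $U_{1}(y)=U_{j}(y)=0$, $V_{3}(y)=V_{4}(y)=0$. The first two assertions are then immediate by matching boundary conditions: the conditions defining $\mathfrak{S}_{12}$ (namely $U_{1}(y)=U_{2}(y)=0$) are exactly those of $\mathcal{L}_{22}$, and those defining $\mathfrak{S}_{13}$ ($U_{1}(y)=U_{3}(y)=0$) are exactly those of $\mathcal{L}_{32}$. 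Hence $\mathfrak{S}_{12}$ is the zero set of $\Delta_{22}$ and $\mathfrak{S}_{13}$ is the zero set of $\Delta_{32}$.

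The remaining case $\mathfrak{S}_{23}$, with conditions $U_{2}(y)=U_{3}(y)=0$, is the genuine obstacle: these do \emph{not} coincide with the conditions $U_{1}(y)=U_{4}(y)=0$ of $\mathcal{L}_{42}$, so the answer cannot be read off directly. The plan is to prove that $\mathfrak{S}_{23}$ and the spectrum of $\mathcal{L}_{42}$ nevertheless agree, which by the recalled fact identifies $\mathfrak{S}_{23}$ with the zeros of $\Delta_{42}$. The tool is the symplectic structure behind \eqref{M*} and Corollary~\ref{real}, used at the level of the Lagrange bracket. Since $F(x)=F^{\star}(x)$, the quasi-derivatives \eqref{ident5} and \eqref{identz} coincide, so \eqref{Lagrange} is defined on any pair of solutions of \eqref{ident4}.

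Fix $\lambda$ and let $E_{\lambda}$ be the space of solutions of \eqref{ident4} satisfying $V_{3}(y)=V_{4}(y)=0$; it is two-dimensional because $y\mapsto\vec{y}(1)$ is a bijection onto $\mathbb{C}^{4}$ and $V_{3},V_{4}$ select two of its components. For $y,\tilde y\in E_{\lambda}$ the bracket $\langle\tilde y,y\rangle$ is independent of $x$ by \eqref{wron}. Evaluating at $x=1$ through $\langle\tilde y,y\rangle|_{x=1}=[\vec{\tilde{y}}(1)]^{T}J_{0}\vec{y}(1)$ and using $y(1)=y^{[2]}(1)=0$ together with $\tilde y(1)=\tilde y^{[2]}(1)=0$ shows that this value vanishes; hence $\langle\tilde y,y\rangle\equiv0$ and, in particular, $[\vec{\tilde{y}}(0)]^{T}J_{0}\vec{y}(0)=0$ for all $y,\tilde y\in E_{\lambda}$. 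Writing $\vec{y}(0)=\text{col}(U_{1}(y),U_{2}(y),U_{3}(y),U_{4}(y))$ and noting that $y\mapsto\vec{y}(0)$ is injective on $E_{\lambda}$, this says that the two-dimensional plane $P=\{\vec{y}(0):y\in E_{\lambda}\}\subset\mathbb{C}^{4}$ is Lagrangian for the symplectic form $\beta(u,w)=u^{T}J_{0}w$.

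To finish, I would use the elementary symplectic fact that $A=\mathrm{span}(e_{1},e_{4})$ and $B=\mathrm{span}(e_{2},e_{3})$ satisfy $A^{\perp}=B$ (indeed $\beta(e_{1},w)=w_{4}$ and $\beta(e_{4},w)=-w_{1}$, so the $\beta$-orthogonal of $A$ is cut out by $w_{1}=w_{4}=0$), whence for the Lagrangian plane $P$ one has $(P\cap A)^{\perp}=P^{\perp}+A^{\perp}=P+B$ and therefore $\dim(P\cap A)=\dim(P\cap B)$. Since $P\cap A\neq0$ is exactly the condition $\lambda\in\mathfrak{S}_{23}$ (a nonzero $y\in E_{\lambda}$ with $U_{2}(y)=U_{3}(y)=0$), while $P\cap B\neq0$ is exactly the condition that $\lambda$ be an eigenvalue of $\mathcal{L}_{42}$, the two spectra coincide, and so $\mathfrak{S}_{23}$ is the zero set of $\Delta_{42}$. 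I expect the only delicate points to be the sign bookkeeping in the $x=1$ evaluation and the constancy of $\dim E_{\lambda}=2$; both are routine.
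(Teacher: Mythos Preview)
Your proof is correct and follows essentially the same route as the paper: both observe that the Lagrange bracket vanishes identically on the two-dimensional space $E_\lambda$ of solutions satisfying $V_3=V_4=0$, so that the image $P=\{\vec y(0):y\in E_\lambda\}$ is Lagrangian for $J_0$. The paper then picks the explicit basis $S_1,S_2$ of $E_\lambda$ and reads off from $\langle S_1,S_2\rangle|_{x=0}=0$ the determinant identity
\[
U_2(S_1)U_3(S_2)-U_3(S_1)U_2(S_2)=U_1(S_1)U_4(S_2)-U_4(S_1)U_1(S_2),
\]
which says the two characteristic functions are literally equal; your symplectic dimension count $\dim(P\cap A)=\dim(P\cap B)$ (valid because $A^\perp=B$ and $P^\perp=P$, even though $A,B$ themselves are not isotropic for $J_0$) is an abstract repackaging of this same identity and yields the slightly weaker, but sufficient, conclusion that the zero sets coincide.
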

	
	\begin{proof}
		Recall that the zeros of $\Delta_{jk}(\lambda)$ coincide with the eigenvalues of the boundary value problem $\mathcal L_{jk}$ for equation \eqref{ident4} subject to the boundary conditions \eqref{bcL}. Clearly, the boundary conditions \eqref{bcL} for $j = k = 2$ and for $j = 3$, $k = 2$ coincide with \eqref{bc-Bar} for $i = 1$, $j = 2$ and for $i = 1$, $j = 3$, respectively. Hence, the zeros of $\Delta_{22}$ and $\Delta_{32}$  coincide with the spectra  $\mathfrak{S}_{12}$ and $\mathfrak{S}_{13}$, respectively. We only need to prove zeros of the function $\Delta_{42}$ coincide with the spectrum $\mathfrak{S}_{23}$.
		
		Recall that $S_1(x, \lambda)$ and $S_2(x, \lambda)$ satisfy the equation $y^{[4]} = \lambda y$ and $\mathcal L = \mathcal L^{\star}$. Therefore, using \eqref{BV} and \eqref{wron}, we obtain
		\begin{equation} \label{ident}
			\langle S_{1},S_{2}\rangle|_{x=0}=\langle S_{1}, S_{2}\rangle|_{x=1}=0.
		\end{equation}
		Using \eqref{ident6}, \eqref{Lagrange}, and \eqref{ident}, we get
		\begin{equation}\label{ident1}
			U_{2}(S_{1})U_{3}(S_{2})-U_{3}(S_{1})U_{2}(S_{2})=U_{1}(S_{1})U_{4}(S_{2})-U_{4}(S_{1})U_{1}(S_{2}).
		\end{equation}
		
		Obviously, the left-hand side of \eqref{ident1} is the characteristic function of the boundary value problem for equation \eqref{ident4} subject to the boundary conditions
		$$
			U_2(y) = U_3(y) = 0, \quad V_3(y) = V_4(y) = 0.
		$$
		Thus, its zeros coincide with the spectrum $\mathfrak{S}_{23}$. The right-hand side of \eqref{ident1} is the characteristic function of the boundary value problem $\mathcal L_{42}$. Hence, the spectrum $\mathfrak S_{32}$ coincide with the zeros of $\Delta_{42}(\lambda)$.
		\end{proof}
	
	\begin{lemma}\label{char}
		The  spectra  $\mathfrak{S}_{12},\mathfrak{S}_{13},\mathfrak{S}_{23}$ uniquely determine the characteristic functions $\Delta_{22}$, $\Delta_{32}$, $\Delta_{42}$, respectively.
	\end{lemma}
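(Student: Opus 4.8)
The plan is to reconstruct each of the three characteristic functions from its zero set by means of Hadamard's factorization theorem, and then to pin down the remaining multiplicative constant using the leading asymptotics recorded in Proposition~\ref{delta}. Throughout, I read the assertion in its comparison form: given a second problem $\tilde{\mathcal L} \in \mathcal W$ whose spectra coincide with those of $\mathcal L$, I must show $\Delta_{22} = \tilde\Delta_{22}$, $\Delta_{32} = \tilde\Delta_{32}$, and $\Delta_{42} = \tilde\Delta_{42}$.

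First I would record that $\Delta_{22}$, $\Delta_{32}$, $\Delta_{42}$ are entire in $\lambda$ (as already noted in the excerpt) and estimate their growth. Writing $\lambda = \rho^4$ and using Proposition~\ref{delta} (together with the uniform bound of Proposition~\ref{delta'}), the dominant term in $\log|\Delta_{ij}(\lambda)|$ is $\Re(\rho s_j) \le |\rho|\,|s_j| = O(|\rho|) = O(|\lambda|^{1/4})$, so each of these functions is entire of order at most $1/4$ in $\lambda$. In particular the exponent of convergence of its zeros is at most $1/4 < 1$, hence $\sum_n |\lambda_n|^{-1} < \infty$, the genus is $0$, and Hadamard's factorization theorem gives, for each such $\Delta$,
\begin{equation*}
\Delta(\lambda) = C\,\lambda^{m} \prod_n \left(1 - \frac{\lambda}{\lambda_n}\right),
\end{equation*}
where $\{\lambda_n\}$ are the nonzero zeros, $m$ is the multiplicity of the zero at the origin, and $C \neq 0$ is a constant. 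Since the canonical product and the integer $m$ are determined by the zero set alone (read off, with multiplicities, from the spectrum), it follows that $\Delta_{ij}$ and $\tilde\Delta_{ij}$ share the same canonical product and the same power $\lambda^m$, whence $\Delta_{ij}(\lambda) = \gamma\,\tilde\Delta_{ij}(\lambda)$ for some nonzero constant $\gamma$ independent of $\lambda$.

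It then remains to prove $\gamma = 1$, and here is where the asymptotics enter decisively. The crucial observation is that in Proposition~\ref{delta} the exponent $a_{ij}$, the quantity $s_j$, and the coefficient $c_{ij}$ are built only from the fourth roots of unity $\omega_k$ and from the orders $p_{k,0}, p_{k,1}$ of the linear forms $U_k, V_k$; they do not involve the coefficients $\tau_1, \tau_2$. Consequently $a_{ij} = \tilde a_{ij}$, $s_j = \tilde s_j$, and $c_{ij} = \tilde c_{ij}$, so dividing the two asymptotic relations along a fixed ray $\arg\rho = \phi$ inside a sector $\Gamma$ yields
\begin{equation*}
\gamma = \lim_{|\rho|\to\infty} \frac{\Delta_{ij}(\rho^4)}{\tilde\Delta_{ij}(\rho^4)} = \frac{c_{ij}}{\tilde c_{ij}} = 1.
\end{equation*}
Therefore $\Delta_{ij} = \tilde\Delta_{ij}$ for the three relevant index pairs, which is exactly the claim that the three spectra determine the three characteristic functions.

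I expect the only genuinely delicate points to be the correct bookkeeping of multiplicities (including a possible zero at $\lambda = 0$) and the verification that $c_{ij}$ is truly coefficient-independent; the former is handled by the explicit power $\lambda^m$ in the factorization, while the latter is immediate from the closed-form expression for $c_{ij}$ in Proposition~\ref{delta}. Since the order estimate and the matching of asymptotics are otherwise routine, the coefficient-independence of the leading constant $c_{ij}$ is really the heart of the argument: it is precisely what forces $\gamma = 1$ and thus upgrades "same zeros" to "same function."
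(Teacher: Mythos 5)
Your proposal is correct and follows essentially the same route as the paper: both establish that $\Delta_{mk}$ is entire of order $1/4$, apply Hadamard's factorization theorem, and then fix the multiplicative constant by matching against the asymptotics of Proposition~\ref{delta}, whose leading constants $c_{mk}$, $a_{mk}$, $s_k$ are independent of $\tau_1,\tau_2$. Your explicit handling of a possible zero at $\lambda=0$ via the factor $\lambda^m$ is a minor refinement the paper leaves implicit, but the argument is otherwise the same.
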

	\begin{proof}
		Let $\Lambda_{m,k}=\{\lambda_{lmk}\}_{l\geq1}$ be the zeros of $\Delta_{mk}(\lambda)$, where $(m,k)$ is the index in the set $\{(2,2),(3,2),(4,2)\}$. It follows from {Proposition \ref{delta'}} that the function $\Delta_{mk}(\lambda)$ is entire in $\lambda$ of the order $\frac{1}{4}$. By Hadamard's factorization theorem, we have
		\begin{equation*}
			{\Delta}_{mk}(\lambda)=C_{mk}\prod_{l=1}^{\infty}\left(1-\frac{\lambda}{\lambda_{lmk}}\right),
		\end{equation*}
		where $C_{mk}$ are some constants.
		
		By virtue of Proposition \ref{delta}, we have the following asymptotic relation:
		\begin{equation*}
			\begin{split}
				\Delta_{mk}(\lambda)=c_{mk}\rho^{a_{mk}}e^{\rho (1-i)}(1+o(1)), \quad \arg\rho=\frac{\pi}{8}, \quad |\rho|\rightarrow\infty,
			\end{split}
		\end{equation*}
		where $c_{22}=\frac{1-i}{8}, c_{32}=\frac{i}{4}, c_{42}=-\frac{1+i}{8},a_{22}=-3,=a_{32}=-2,a_{42}=-1.$
		Hence, we obtain
		\begin{equation*}
			\begin{split}
				C_{mk}=\lim_{\arg\lambda=\frac{\pi}{2} \atop |\lambda|\rightarrow{\infty}} c_{mk}\rho^{a_{mk}}e^{\rho (1-i)}\prod_{l=1}^{\infty}\left(1-\frac{\lambda}{{\lambda}_{lmk}}\right)^{-1}. 
			\end{split}
		\end{equation*}
		This yields the claim.\end{proof}
	
	For convenience, we introduce the following notation.
	If for $\lambda\rightarrow\lambda_{0},$
		$$A(\lambda)=\sum_{k=-q}^{p}a_k(\lambda-\lambda_{0})^k+o((\lambda-\lambda_{0})^p),$$
		then $A_{\langle k\rangle}(\lambda_0):=a_k.$
	
		Along with the Weyl-Yurko matrix, we also consider the discrete spectral data of the problem $\mathcal L$.
	Denote by $\Lambda_k=\{\lambda_{nk}\}_{n\geq 1}$ the set of zeros of  $\Delta_{kk}$ for $k=1,2,3$, $\Lambda=\cup_{k=1}^3\Lambda_k$ the set of the poles of the Weyl matrix $M(\lambda)$. Consider the Laurent series
	\begin{equation*}
	M(\lambda)=\frac{M_{\langle-1\rangle}(\lambda_{0})}{\lambda-\lambda_{0}}+M_{\langle0\rangle}(\lambda_{0})+M_{\langle1\rangle}(\lambda_{0})(\lambda-\lambda_{0})+\dots, \quad \lambda_{0} \in \Lambda.
	\end{equation*}
	Define the weight matrices
	\begin{equation}\label{N}
	\mathcal{N}(\lambda_{0}):=(M_{\langle0\rangle}(\lambda_{0}))^{-1}M_{\langle-1\rangle}(\lambda_{0}), \quad \lambda_{0} \in \Lambda.
	\end{equation}
	
	Consider the entires of the matrix $\mathcal{N}(\lambda_0)=[\mathcal{N}_{kj}(\lambda_0)]_{k,j=1}^4.$ Since $M(\lambda)$ is unit lower-triangular, we have $\mathcal{N}_{kj}(\lambda_0)=0$ for all $k\leq j,\;\lambda_{0}\in\Lambda.$ Moreover, in  view of Lemma~2.4 of \cite{Bon1}, we also have:
	
    \begin{proposition} \label{prop:N}
		Suppose that $\mathcal{L} \in \mathcal{W},$ we have the following properties:
		
		(I) If $\lambda_{0}\notin \Lambda_k$, then $\mathcal{N}_{sj}(\lambda_0)=0,\;s=\overline{k+1,4},\;j=\overline{1,k}.$
		
		(II) If $\lambda_{0}\in \Lambda_s$ for $s=\overline{\nu+1,k-1},\;\lambda_{0}\notin \Lambda_\nu,\;\lambda_{0}\notin \Lambda_k,\;1\leq\nu+1\;\textless\;k \leq 4 $, then $\mathcal{N}_{k,\nu+1}(\lambda_0)\neq 0.$ (Here $\Lambda_0=\Lambda_4=\varnothing$).
	\end{proposition}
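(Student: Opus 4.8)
The plan is to deduce both properties from the Laurent structure of $M(\lambda)$ at the pole $\lambda_0$, and to reduce the assertion to the general result of Lemma~2.4 of \cite{Bon1} by verifying that $\mathcal L\in\mathcal W$ supplies the nondegeneracy it requires. The first step is to record the structural facts. Since $\mathcal L\in\mathcal W$, every pole of $M(\lambda)$ is simple, and by \eqref{M} the $k$-th column of $M(\lambda)$ has a pole at $\lambda_0$ precisely when $\lambda_0\in\Lambda_k$ (recall $\Lambda_4=\varnothing$). Hence $M_{\langle-1\rangle}(\lambda_0)$ is strictly lower-triangular and its $k$-th column vanishes unless $\lambda_0\in\Lambda_k$, while $M_{\langle0\rangle}(\lambda_0)$ is unit lower-triangular, hence invertible; consequently $\mathcal N(\lambda_0)=(M_{\langle0\rangle}(\lambda_0))^{-1}M_{\langle-1\rangle}(\lambda_0)$ is strictly lower-triangular with the same column support, which already gives $\mathcal N_{sj}(\lambda_0)=0$ whenever $\lambda_0\notin\Lambda_j$. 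Moreover, the no-common-zeros part of $\mathcal W$ forbids two consecutive indices from both lying in $\{k:\lambda_0\in\Lambda_k\}$.

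The second step is to represent the columns of $\mathcal N(\lambda_0)$ through boundary data. Rewriting \eqref{N} as $M_{\langle-1\rangle}=M_{\langle0\rangle}\mathcal N$ and taking the residue in \eqref{M'} at $\lambda_0$ yields
\begin{equation*}
	\Psi_j(x):=\operatorname{Res}_{\lambda=\lambda_0}\Phi_j(x,\lambda)=\sum_{s>j}\mathcal N_{sj}(\lambda_0)\,\widehat C_s(x),
\end{equation*}
where $\widehat C_s$ is the regular part of $\Phi_s$ at $\lambda_0$, i.e.\ the $s$-th column of $C(x,\lambda_0)M_{\langle0\rangle}(\lambda_0)$. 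From the defining conditions of $\Phi_s$ one reads off $U_\xi(\widehat C_s)=\delta_{\xi s}$ for $\xi\le s$ and $V_\eta(\widehat C_s)=0$ for $\eta>s$, and since $U_\xi(\Phi_j),V_\eta(\Phi_j)$ are constant in $\lambda$ for $\xi\le j<\eta$, the residue satisfies $U_\xi(\Psi_j)=0$ ($\xi\le j$) and $V_\eta(\Psi_j)=0$ ($\eta>j$). Applying the forms $U_\xi,V_\eta$ to the displayed identity then expresses the entries $\mathcal N_{sj}(\lambda_0)$ recursively in terms of the residues $\operatorname{Res}_{\lambda_0}m_{sj}$ and of the diagonal quantities $V_\eta(\widehat C_\eta)$, which equal the regular parts at $\lambda_0$ of the ratio $\Delta_{\eta-1,\eta-1}/\Delta_{\eta\eta}$ (this identity follows from Cramer's rule applied to the linear system defining $\Phi_\eta$).

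For property (I) I would apply $V_\eta$ with $\eta=4,3,\dots,k+1$ to the identity for $\Psi_j$, $j\le k$. Since $V_\eta(\widehat C_s)=0$ for $s<\eta$, this produces a triangular system in the unknowns $\mathcal N_{sj}(\lambda_0)$, $s>k$, whose diagonal entries involve $\Delta_{\eta-1,\eta-1}(\lambda_0)$; the hypothesis $\lambda_0\notin\Lambda_k$ kills the block in the generic situation where these diagonals are nonzero. Property (II) is the genuine obstacle, and the degenerate diagonals in (I) (when some $\lambda_0\in\Lambda_{\eta-1}$ for an intermediate $\eta$) share its difficulty: it is a non-vanishing statement that the triangular system alone does not deliver. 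Under $\mathcal W$ its hypothesis is non-vacuous only when the range $\overline{\nu+1,k-1}$ reduces to a single active index, i.e.\ $k=\nu+2$, so the surviving entry is $\mathcal N_{\nu+2,\nu+1}(\lambda_0)$; one must then show it equals a nonzero ratio of residues of $\Delta$-functions, the non-vanishing being guaranteed precisely by the simplicity of the zeros of $\Delta_{\nu+1,\nu+1}$ and by $\Delta_{\nu+1,\nu+1}$ and $\Delta_{kk}$ having no common zero at $\lambda_0$. As $\mathcal L\in\mathcal W$ supplies exactly these conditions, both (I) and (II) follow from Lemma~2.4 of \cite{Bon1}.
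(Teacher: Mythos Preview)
Your proposal is correct and matches the paper's approach: the paper does not prove this proposition directly but simply records it as a consequence of Lemma~2.4 of \cite{Bon1}, noting that the class $\mathcal W$ ensures simplicity and disjointness of the relevant zeros. Your additional explanatory sketch (column support of $M_{\langle-1\rangle}$, the reduction under $\mathcal W$ of part~(II) to the single-index case $k=\nu+2$) is sound and goes beyond what the paper writes, but the core justification---invoking Lemma~2.4 of \cite{Bon1}---is identical.
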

	
	In the proof of the main theorem, an important role is played by the matrix of spectral mappings $\mathcal{P}(x, \lambda)$, which is defined as follows:
	\begin{equation} \label{defP}
	\mathcal P(x, \lambda) := \Phi(x, \lambda) (\tilde \Phi(x, \lambda))^{-1}.	
	\end{equation}
	Thus, $\mathcal P(x, \lambda)$ maps the Weyl solution matrix $\tilde \Phi(x, \lambda)$ of the problem $\tilde {\mathcal L}$ to the Weyl solution matrix $\Phi(x, \lambda)$ of the problem $\mathcal L$. 	
	
	Lemma~4.6 of \cite{Bon1} and the proof of Theorem~2 in \cite{Bon3} imply the following proposition.
	
	\begin{proposition} \label{prop:P}
		Suppose that $\mathcal{L}, \tilde{\mathcal{L}} \in \mathcal{W}, \Lambda=\tilde{\Lambda}$ and $\mathcal{N}(\lambda_0)=\tilde{\mathcal{N}}(\lambda_0)$ for all $\lambda_0 \in \Lambda$. Then the matrix of spectral mappings $\mathcal{P}(x, \lambda)$ does not depend on $\lambda$, that is, $\mathcal{P}(x, \lambda) \equiv \mathcal P(x)$. Moreover, $\mathcal{P}(x)$ is a unit lower-triangular matrix functions and the following relation holds:
		\begin{equation}\label{PF}
		\mathcal{P}'(x)+\mathcal{P}(x)\tilde{F}(x)=F(x)\mathcal{P}(x), \quad x\in (0,1).
		\end{equation}
	\end{proposition}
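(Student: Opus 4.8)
The plan is to regard the matrix of spectral mappings $\mathcal{P}(x,\lambda)=\Phi(x,\lambda)(\tilde\Phi(x,\lambda))^{-1}$, for each fixed $x$, as a function of $\lambda$; to show it is entire and bounded, hence $\lambda$-independent by Liouville's theorem; and then to extract \eqref{PF} by differentiation. First I would reduce the pole analysis to the Weyl--Yurko matrices. By \eqref{M'}, $\mathcal{P}(x,\lambda)=C(x,\lambda)\,G(\lambda)\,(\tilde C(x,\lambda))^{-1}$ with $G(\lambda):=M(\lambda)(\tilde M(\lambda))^{-1}$. Since the solutions $C_k$, $\tilde C_k$ and their quasi-derivatives are entire in $\lambda$, and since $\det C=\det\tilde C=1$ (Abel's formula for the first-order system $\vec y\,'=(F+\lambda E)\vec y$, whose coefficient matrix has zero trace; here $E$ denotes the matrix with single nonzero entry $1$ in position $(4,1)$, and $C(0,\lambda)=I$), both $C$ and $(\tilde C)^{-1}=\mathrm{adj}\,\tilde C$ are entire in $\lambda$. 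Hence the poles of $\mathcal{P}(x,\lambda)$ in $\lambda$ can occur only at the poles of $G$, that is, at points of $\Lambda=\tilde\Lambda$.

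The conceptual heart is to show $G$ is entire, and this is where the hypotheses enter. Fix $\lambda_0\in\Lambda_k$. As $\mathcal{L},\tilde{\mathcal{L}}\in\mathcal{W}$, the matrices $M,\tilde M$ have simple poles at $\lambda_0$ whose residues are supported in the $k$-th column and vanish on the diagonal. Writing $M=M_{\langle 0\rangle}+(\lambda-\lambda_0)^{-1}M_{\langle -1\rangle}+\dots$ near $\lambda_0$ and using $M_{\langle -1\rangle}=M_{\langle 0\rangle}\mathcal{N}(\lambda_0)$ from \eqref{N}, together with the analogous identity for $\tilde M$ with $\tilde{\mathcal{N}}(\lambda_0)=\mathcal{N}(\lambda_0)$, I would set $R:=(\tilde M_{\langle 0\rangle})^{-1}\tilde M_{\langle -1\rangle}=\tilde{\mathcal{N}}(\lambda_0)$. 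Because $R$ is column-$k$-supported with zero $(k,k)$-entry, $R^2=0$, so $(\tilde M)^{-1}=(I-(\lambda-\lambda_0)^{-1}R)(\tilde M_{\langle 0\rangle})^{-1}$ near $\lambda_0$. Multiplying out $G=M(\tilde M)^{-1}$, the $(\lambda-\lambda_0)^{-2}$-coefficient is proportional to $M_{\langle -1\rangle}R$, which vanishes, and the $(\lambda-\lambda_0)^{-1}$-coefficient equals $(M_{\langle -1\rangle}-M_{\langle 0\rangle}R)(\tilde M_{\langle 0\rangle})^{-1}=M_{\langle 0\rangle}(\mathcal{N}(\lambda_0)-\tilde{\mathcal{N}}(\lambda_0))(\tilde M_{\langle 0\rangle})^{-1}=0$. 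Thus the principal part of $G$ at $\lambda_0$ cancels and $\mathcal{P}(x,\lambda)$ is entire in $\lambda$; the case of overlapping spectra (when $\lambda_0$ lies in several $\Lambda_k$ simultaneously) is handled by the finer structure of the weight matrices recorded in Proposition~\ref{prop:N} and is exactly the content of the cited Lemma~4.6 of \cite{Bon1}.

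Next I would establish boundedness. Using the Birkhoff-type asymptotics of the Weyl solutions underlying Propositions~\ref{delta} and \ref{delta'}, one checks that in each sector $\Gamma_k$ the matrices $\Phi$ and $\tilde\Phi$ share the same leading exponential behaviour, so that $\mathcal{P}(x,\lambda)=\Phi(\tilde\Phi)^{-1}$ stays uniformly bounded for large $|\rho|$ and $x\in[0,1]$ and converges there to a unit lower-triangular matrix as $|\lambda|\to\infty$. Since Step~2 already removed all poles, these bounds hold up to the sector boundaries, and together with boundedness on compact sets they give boundedness of $\mathcal{P}(x,\cdot)$ on the whole $\lambda$-plane; Liouville's theorem then yields $\mathcal{P}(x,\lambda)\equiv\mathcal{P}(x)$, and the triangular large-$\lambda$ limit forces $\mathcal{P}(x)$ to be unit lower-triangular for every $x$. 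Finally, differentiating $\mathcal{P}=\Phi(\tilde\Phi)^{-1}$ and using $\Phi'=(F+\lambda E)\Phi$, $\tilde\Phi'=(\tilde F+\lambda E)\tilde\Phi$ (with the same $\lambda$-part $\lambda E$) gives
\[
\mathcal{P}'(x)=F(x)\mathcal{P}(x)-\mathcal{P}(x)\tilde F(x)+\lambda\bigl(E\mathcal{P}(x)-\mathcal{P}(x)E\bigr).
\]
As $\mathcal{P}(x)$ is $\lambda$-independent, the coefficient of $\lambda$ must vanish; indeed $E\mathcal{P}(x)=\mathcal{P}(x)E$ for any unit lower-triangular $\mathcal{P}(x)$ (a direct check, using $\mathcal{P}_{1j}=0$ for $j>1$, $\mathcal{P}_{i4}=0$ for $i<4$, and $\mathcal{P}_{11}=\mathcal{P}_{44}=1$), which leaves precisely \eqref{PF}.

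I expect the main obstacle to be Step~3 rather than the residue cancellation: once phrased through $G=M(\tilde M)^{-1}$ and the identities $M_{\langle-1\rangle}=M_{\langle0\rangle}\mathcal{N}$, $\tilde M_{\langle-1\rangle}=\tilde M_{\langle0\rangle}\mathcal{N}$ with the common $\mathcal{N}$, analyticity is essentially automatic. The delicate part is to make the uniform boundedness of $\mathcal{P}(x,\lambda)$ rigorous over the entire $\lambda$-plane and to pin down its unit lower-triangular limit, since both rest on sharp Birkhoff asymptotics of the Weyl solutions in all sectors, including the directions separating them.
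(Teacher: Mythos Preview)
Your proposal is correct and follows precisely the method-of-spectral-mappings argument that the cited sources \cite{Bon1,Bon3} use: reduce $\mathcal{P}(x,\lambda)$ to $C(x,\lambda)\,M(\lambda)\tilde M(\lambda)^{-1}\,\tilde C(x,\lambda)^{-1}$, cancel the poles via $M_{\langle-1\rangle}=M_{\langle0\rangle}\mathcal{N}(\lambda_0)$ and the hypothesis $\mathcal N=\tilde{\mathcal N}$, establish boundedness from the Birkhoff asymptotics, apply Liouville, and then differentiate. The paper itself does not give an independent proof of this proposition---it simply quotes Lemma~4.6 of \cite{Bon1} and the proof of Theorem~2 in \cite{Bon3}---so your outline is essentially a faithful reconstruction of those external arguments rather than a different route.

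One small remark on the pole-cancellation step: your formula $(\tilde M)^{-1}=(I-(\lambda-\lambda_0)^{-1}R)(\tilde M_{\langle0\rangle})^{-1}$ is not literally the Laurent expansion of $(\tilde M)^{-1}$ (the constant term is $(\tilde M_{\langle0\rangle}-\tilde M_{\langle1\rangle}R)^{-1}$ plus a further correction). A cleaner way to phrase the same idea, which also covers the overlapping case $\lambda_0\in\Lambda_1\cap\Lambda_3$ without appealing separately to Lemma~4.6, is to set $P(\lambda)=I+(\lambda-\lambda_0)^{-1}\mathcal N(\lambda_0)$; since $\mathcal N(\lambda_0)^2=0$ (by Proposition~\ref{prop:N}), both $MP^{-1}$ and $\tilde MP^{-1}$ are holomorphic at $\lambda_0$ with unit-lower-triangular (hence invertible) values there, and then $G=(MP^{-1})(\tilde MP^{-1})^{-1}$ is manifestly holomorphic. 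This sidesteps computing the Laurent coefficients of $(\tilde M)^{-1}$ altogether.
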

	
	\section{Proofs of main theorems} \label{sec:proofs}
	In this section, we prove Theorems~\ref{thm:main} and~\ref{thm:main222}. The main idea of the proof of Theorem~\ref{thm:main} is that the given spectra $\mathfrak S_{ij}$ uniquely specify the entries of the Weyl-Yurko matrix except for $m_{41}(\lambda)$. The entry $m_{41}(\lambda)$ is uniquely specified up to a constant. Therefore, we cannot uniquely recover $\tau_2$. Anyway, we apply the method of spectral mappings and use the special structure of the associated matrix \eqref{ident3} to achieve the assertion of Theorem~\ref{thm:main}. The proof of Theorem~\ref{thm:main222} also relies on the special structure of the associated matrix as well as of the boundary conditions \eqref{bc-Bar}.
	
	\begin{proof}[Proof of Theorem~\ref{thm:main}]
	Suppose that the two problems $\mathcal{L}$ and $\tilde{\mathcal{L}}$ of class $\mathcal W$ have the same spectra $\mathfrak{S}_{12}=\tilde{\mathfrak{S}}_{12},\mathfrak{S}_{13}=\tilde{\mathfrak{S}}_{13},\mathfrak{S}_{23}=\tilde{\mathfrak{S}}_{23}$. The proof consists of four steps.
	
	\textbf{Step 1.}\label{spe}
 	Let us show that $m_{jk}(\lambda) = \tilde m_{jk}(\lambda)$ for $(j,k) = (2,1), (3,1), (3,2), (4,2)$.
	
	By Lemma \ref{char} and the definition of the Weyl-Yurko matrix, we have $m_{42}(\lambda)=\tilde{m}_{42}(\lambda)$ and $m_{32}(\lambda)=\tilde{m}_{32}(\lambda)$. We also have $m_{43}(\lambda)=m_{21}(\lambda)$ via (\ref{real1}), so we only need to prove $m_{21}(\lambda)=\tilde{m}_{21}(\lambda)$ and $m_{31}(\lambda)=\tilde{m}_{31}(\lambda)$.
	
	We calculate the residues of the both sides in (\ref{real2}) and so get the following relation:
	\begin{equation*}
		m_{42\langle-1\rangle}(\lambda_{n2})=m_{32\langle-1\rangle}(\lambda_{n2})m_{21}(\lambda_{n2}), \quad n\geq 1,
	\end{equation*}
	where $\{\lambda_{n2}\}_{n\geq1}$ are the zeros of the function $\Delta_{22}$.
	Hence, we have
	\begin{equation} \label{equal}
		m_{21}(\lambda_{n2})=\tilde{ m}_{21}(\lambda_{n2}).
	\end{equation}
	
	Construct the function
	\begin{equation*}
		G(\lambda)=\Delta_{11}(\lambda)\tilde{\Delta}_{21}(\lambda)-\Delta_{21}(\lambda)\tilde{\Delta}_{11}(\lambda),
	\end{equation*}
	Note that $\mathcal L \in \mathcal W$ implies $\Delta_{11}(\lambda_{n2}) \ne 0$, $n \ge 1$. Therefore, it follows from (\ref{equal}) that 
	\begin{equation*}
		G(\lambda_{n2})=0, \quad n\geq 1.
	\end{equation*}
	Hence, the function $K(\lambda)=\dfrac{G(\lambda)}{\Delta_{22}(\lambda)}$ is entire in $\lambda$.
	
	By Proposition \ref{delta'}, we get the following asymptotic estimates on the ray $\arg\rho=\frac{\pi}{4}$:
	\begin{equation*}
			{\Delta}_{11}(\lambda)=O(\rho^{-3}e^\rho),\quad
			{\Delta}_{21}(\lambda)=O(\rho^{-2}e^\rho),\quad
			|{\Delta}_{22}(\lambda)|\geq C_{\delta}|\rho|^{-3}e^{\Im \rho+\Re \rho}.
	\end{equation*}
	
	Consequently, we obtain
	\begin{equation*}
		|K(\lambda)|\leq|\rho|^{-2}|e^{\Re\rho-\Im\rho}|=|\rho|^{-2}\rightarrow 0,\;\;\;\arg\rho=\frac{\pi}{4},\;|\rho|\rightarrow\infty.
	\end{equation*}
	Hence
	\begin{equation*}
		|K(\lambda)|\rightarrow0,\;\;\;\arg\lambda=\pi,\;\lambda\rightarrow-\infty.
	\end{equation*}
	The entire functions ${\Delta}_{21}(\lambda)$, ${\Delta}_{22}(\lambda)$ and ${\Delta}_{11}(\lambda)$ have the order $\frac{1}{4}$ and so does the function $K(\lambda)$. Applying Phragmen-Lindel\"of's theorem \cite{BFY},  we can get $K(\lambda) \equiv 0,\;\lambda\in \mathbb{C}$. Hence $G(\lambda) \equiv 0,\;\lambda\in \mathbb{C}$ and so $m_{21}(\lambda)=\tilde{m}_{21}(\lambda)$.
	Taking (\ref{real2}) into account, we also get $m_{31}(\lambda) = \tilde m_{31}(\lambda)$.

	\textbf{Step 2.} Let us prove that, if $m_{21}(\lambda)=\tilde{m}_{21}(\lambda)$,
	$m_{42}(\lambda)=\tilde{m}_{42}(\lambda)$, then $m_{41\langle-1\rangle}(\lambda_{n1})=\tilde{m}_{41\langle-1\rangle}(\lambda_{n1})$, where $\lambda \in \mathbb{C}$ and $\{\lambda_{n1}\}_{n\geq 1}$ are the zeros of $\Delta_{11}(\lambda)$.
	
	Consider the entires of the matrix $\mathcal{N}(\lambda)=[\mathcal{N}_{kj}(\lambda)]_{k,j=1}^4.$
	By Proposition \ref{prop:N} we know that every element in $\mathcal{N}(\lambda_{n1})$ is zero except $\mathcal N_{21}(\lambda_{n1})$ and $\mathcal N_{43}(\lambda_{n1})$.
	Using (\ref{N}), we get the equation
	\begin{equation*}
		\begin{split}
			&\begin{pmatrix} 1 & 0 & 0 & 0\\ m_{21\langle0\rangle}(\lambda_{n1}) & 1 & 0 & 0 \\ m_{31\langle0\rangle}(\lambda_{n1}) & m_{32}(\lambda_{n1}) & 1 & 0 \\ m_{41\langle0\rangle}(\lambda_{n1}) & m_{42}(\lambda_{n1}) & m_{43\langle0\rangle}(\lambda_{n1}) & 1\end{pmatrix}\begin{pmatrix} 0 & 0 & 0 & 0\\ \mathcal N_{21}(\lambda_{n1}) & 0 & 0 & 0 \\ 0 & 0 & 0 & 0 \\ 0 & 0 & \mathcal N_{43}(\lambda_{n1}) & 0\end{pmatrix}\\&=\begin{pmatrix} 0 & 0 & 0 & 0\\ m_{21\langle-1\rangle}(\lambda_{n1}) & 0 & 0 & 0 \\ m_{31\langle-1\rangle}(\lambda_{n1}) & 0 & 0 & 0 \\ m_{41\langle-1\rangle}(\lambda_{n1}) & 0 & m_{43\langle-1\rangle}(\lambda_{n1}) & 0\end{pmatrix}.
		\end{split}
	\end{equation*}
	Consequently
	\begin{equation*}
		\mathcal N_{21}(\lambda_{n1})=m_{21\langle-1\rangle}(\lambda_{n1}),\quad \mathcal N_{21}(\lambda_{n1})m_{42}(\lambda_{n,1})=m_{41\langle-1\rangle}(\lambda_{n1}). 
	\end{equation*}
	Hence
	$$m_{41\langle-1\rangle}(\lambda_{n1})=m_{21\langle-1\rangle}(\lambda_{n1})m_{42}(\lambda_{n1}).$$
	Therefore, we have $m_{41\langle-1\rangle}(\lambda_{n1})=\tilde{m}_{41\langle-1\rangle}(\lambda_{n1})$.
	
	\textbf{Step 3.} Let us prove that $\Lambda=\tilde{\Lambda}$ and $\mathcal{N}(\lambda_0)=\tilde{\mathcal{N}}(\lambda_0)$ for all $\lambda_0 \in \Lambda$.
	
	The equality $\Lambda=\tilde{\Lambda}$ is obvious since  $\Lambda$ and $\tilde{\Lambda}$ are the poles of the functions $\{m_{ij}\}_{(i,j)=(2,1),(3,2),(4,3)}$ and $\{\tilde{m}_{ij}\}_{(i,j)=(2,1),(3,2),(4,3)}$, respectively.
	Consider the entires of the matrix $\mathcal{N}(\lambda_0)=[\mathcal{N}_{kj}(\lambda_0)]_{k,j=1}^4.$
    By Proposition \ref{prop:N}, we know that every element in $\mathcal{N}(\lambda_0)$ is zero except $\mathcal{N}_{21}(\lambda_{0})$, $\mathcal{N}_{32}(\lambda_{0})$ and $\mathcal{N}_{43}(\lambda_{0})$.
	
	Taking (\ref{N}) into account, we obtain that $$\mathcal{N}_{21}(\lambda_{0})=m_{21\langle-1\rangle}(\lambda_{0}), \quad \mathcal{N}_{32}(\lambda_{0})=m_{32\langle-1\rangle}(\lambda_{0}), \quad \mathcal{N}_{43}(\lambda_{0})=m_{43\langle-1\rangle}(\lambda_{0}).$$ Therefore we have $\mathcal{N}(\lambda_0)=\tilde{\mathcal{N}}(\lambda_0)$ for all $\lambda_0 \in \Lambda$.
	
	\textbf{Step 4.} \label{uni} Let us show that $\Lambda=\tilde{\Lambda}$ and $\mathcal{N}(\lambda_0)=\tilde{\mathcal{N}}(\lambda_0)$ for all $\lambda_0 \in \Lambda$ yield $\tilde \tau_1(x) = \tau_1(x)$ and $\tilde \tau_2(x) = \tau_2(x) + cx$ a.e. on $(0,1)$.
	
	Consider the matrix of spectral mappings $\mathcal P(x, \lambda)$ defined by \eqref{defP}.
	It can be shown that
	\begin{equation}\label{P}
		\mathcal{P}(x,\lambda)=\Phi(x,\lambda)J_{0}^{-1}(\tilde \Phi(x,\lambda))^{T}J.
	\end{equation}

	By virtue of Proposition~\ref{prop:P}, $\mathcal P(x, \lambda) \equiv \mathcal P(x)$, where $\mathcal P(x)$ is a unit-lower triangular matrix.
	Therefore, taking (\ref{M'}) and (\ref{P}) into account, we get
	\begin{equation}\label{PF'}
		\mathcal{P}(0)=M(\lambda)J_{0}^{-1}\tilde{M}(\lambda)J.
	\end{equation}
	According to (\ref{PF'}), we have 
	\begin{align*}
		\mathcal{P}(0)=\begin{pmatrix} 1 & 0 & 0 & 0\\ 0  & 1 & 0 & 0 \\ 0 & 0 & 1 & 0 \\ \tilde{m}_{41}-m_{41} & 0 & 0 & 1\end{pmatrix}.
	\end{align*}
	
	According to Proposition~\ref{prop:P}, the relation \eqref{PF} holds.
	Substituting (\ref{ident3}) into 
	(\ref{PF}), we get
	\begin{equation*}
	\begin{split}
	\begin{pmatrix}
		0 & 0 & 0 & 0\\ \mathcal{P}'_{21}  & 0 & 0 & 0 \\ \mathcal{P}'_{31} & \mathcal{P}'_{32} & 0 & 0 \\ \mathcal{P}'_{41} & \mathcal{P}'_{42} &\mathcal{P}'_{43} & 0
	\end{pmatrix}&+\begin{pmatrix}
	1 & 0 & 0 & 0\\ \mathcal{P}_{21}  & 1 & 0 & 0 \\ \mathcal{P}_{31} & \mathcal{P}_{32} & 1 & 0 \\ \mathcal{P}_{41} & \mathcal{P}_{42} &\mathcal{P}_{43} & 1
\end{pmatrix}\begin{pmatrix} 0 & 1 & 0 & 0\\ -\tilde{\tau}_{2} & \tilde{\tau}_{1} & 1 & 0 \\ \tilde{\tau}_{1}\tilde{\tau}_{2} & -\tilde{\tau}_{1}^{2}+2\tilde{\tau}_{2} & -\tilde{\tau}_1 & 1 \\ \tilde{\tau}_{2}^{2} & -\tilde{\tau}_1\tilde{\tau}_2  & -\tilde{\tau_{2}} & 0\end{pmatrix}\\
&=\begin{pmatrix} 0 & 1 & 0 & 0\\ -\tau_{2} & \tau_{1} & 1 & 0 \\ \tau_{1}\tau_{2} & -\tau_{1}^{2}+2\tau_{2} & -\tau_{1} & 1 \\ \tau_{2}^{2} & -\tau_{1}\tau_{2} & -\tau_{2} & 0\end{pmatrix}\begin{pmatrix}
	1 & 0 & 0 & 0\\ \mathcal{P}_{21}  & 1 & 0 & 0 \\ P_{31} & \mathcal{P}_{32} & 1 & 0 \\ \mathcal{P}_{41} & \mathcal{P}_{42} &\mathcal{P}_{43} & 1
\end{pmatrix}.
\end{split}
	\end{equation*}

	From which, we obtain $\mathcal{P}_{32}'=\mathcal{P}_{41}'=0$, $\mathcal{P}_{32}=\tilde{\tau}_{1}-\tau_{1}$, $\mathcal{P}_{42}=\tilde{\tau}_{2}-\tau_{2}$, $\mathcal{P}_{42}'=-\mathcal{P}_{41}$. 
	Then we have $\mathcal{P}_{32}=\mathcal{P}_{32}(0)=0,\;\mathcal{P}_{41}=\mathcal{P}_{41}(0)=\tilde{m}_{41}-m_{41}=c,\;\mathcal{P}_{42}'=-c,$ where $c$ is a constant.
	Hence, we obtain $\tilde{\tau}_{1}=\tau_{1}$, $\tilde{\tau}_{2}=\tau_{2}+cx$ in $L_{2}(0,1)$, which concludes the proof.
	\end{proof}

\begin{proof}[Proof of Theorem~\ref{thm:main222}]
	Recall that the matrices $F(x)$ and $\tilde{F}(x)$ are defined by (\ref{ident3}). Suppose that $\tilde{\tau}_{1}=\tau_{1}$, $\tilde{\tau}_{2}=\tau_{2}+cx$. Then, the quasi-derivatives equal
	\begin{equation*}
		\begin{split}
			y^{[4]}&=y^{(4)}-\tau_{1}''y'-\tau_{1}'y''+\tau_{2}''y\\&=y^{(4)}-\tilde{\tau}_{1}''y'-\tilde{\tau}_{1}'y''+\tilde{\tau}_{2}''y
		\end{split}
	\end{equation*}
	and $l(y)=y^{[4]}$. Consider the boundary conditions corresponding to $\mathfrak{S}_{12}$:
	\begin{equation*}
		\begin{split}
			&U_{1}(y)=y^{[0]}(0)=y(0)=0,\;\;U_{2}(y)=y^{[1]}(0)=y'(0)=0,\\
			&V_{3}(y)=y^{[2]}(1)=y''(1)-\tau_{1}y'(1)=y''(1)-\tilde{\tau}_{1}y'(1)=0,\\
			&V_{4}(y)=y(1)=0,
		\end{split}
	\end{equation*}
	then we have $\mathfrak{S}_{12}=\tilde{\mathfrak{S}}_{12}$. By using the same method, we can obtain $\mathfrak{S}_{13}=\tilde{\mathfrak{S}}_{13}$ and $\mathfrak{S}_{23}=\tilde{\mathfrak{S}}_{23}$.\end{proof}

\section{Lower singularity orders}\label{sec:different}

The case $p \in W_2^{-1}(0,1)$ and $q \in W_2^{-2}(0,1)$ corresponds to the highest singularity orders for which the Mirzoev-Shkalikov regularization \cite{Mir1} of the differential expresssion \eqref{ident2} exists. In this section, we study the case of lower singularity orders of the coefficients $p$ and $q$. In this case, the inverse problem can be considered from the two viewpoints. First, it can be treated as the special case of Problem~\ref{ip:main} related to the associated matrix~\eqref{ident3}. Then, the results for lower singularity orders can be derived as corollaries of Theorem~\ref{thm:main}. Second, we can use other types of associated matrices. From these two viewpoints, we can obtain different spectra. Anyway, the uniqueness of the inverse problem solution holds. We illustrate this by the following two examples.

\begin{example}\label{e1}
	Suppose that $p\in W_2^{-1}(0,1)$, $\tau_{1}'=p$, $\tau_{1}\in L_{2}(0,1)$, $q\in W_{2}^{-1}(0,1), \sigma_{2}'=q$, $\sigma_{2}\in L_{2}(0,1).$ If $\mathfrak{S}_{12}=\tilde{\mathfrak{S}}_{12}$, $\mathfrak{S}_{13}=\tilde{\mathfrak{S}}_{13}$, $\mathfrak{S}_{23}=\tilde{\mathfrak{S}}_{23}$, then $\tau_{1}=\tilde{\tau}_{1},$ $\sigma_{2}=\tilde{\sigma}_{2}+c$.
\end{example}
\begin{enumerate}
	\item From the first viewpoint, put $$\tau_2(x)=\int_{0}^{x}\sigma_{2}(t)dt+\tau_2(0),$$ then $\tau_2''=q$. Obviously, $\tau_2\in L_{2}(0,1)\cap AC[0,1]$. Consider the quasi-derivatives induced by the associated matrix \eqref{ident3}. In this case, we have
	\begin{equation}\label{boundary}
		\begin{split}
			&U_{1}(y)=y^{[0]}(0)=y(0)=0,\;\;U_{2}(y)=y^{[1]}(0)=y'(0)=0,\\
			&U_{3}(y)=y^{[2]}(0)=y''(0)+\tau_{2}(0)y(0)-\tau_{1}(0)y'(0)=0,\\&V_{3}(y)=y^{[2]}(1)=y''(1)+\tau_{2}(1)y(1)-\tau_{1}(1)y'(1)=0,\\
			&V_{4}(y)=y(1)=0.
		\end{split}
	\end{equation}

	By Theorem \ref{thm:main}, we have $\tau_{2}=\tilde{\tau}_{2}+cx$, then we obtain $\tau_{2}'=\tilde{\tau}_{2}'+c$, that is, $\sigma_{2}=\tilde{\sigma}_{2}+c$.
	\item From the second viewpoint, we use the following associated matrix for regularization of the differential expression (\ref{ident2}) with $p,q\in W_2^{-1}(0,1)$:
	\begin{equation*}
		\begin{split}
			F(x)=\begin{pmatrix}
				0 & 1 & 0 & 0\\ 0  & \tau_1 & 1 & 0 \\ -\sigma_2 & -\tau_1^2 & -\tau_1 & 1 \\ 0 & \sigma_2 &0 & 0
			\end{pmatrix}.
		\end{split}
	\end{equation*}
	In this case, we have
		\begin{equation}\label{boundary2}
		\begin{split}
			&U_{1}(y)=y^{[0]}(0)=y(0)=0,\;\;U_{2}(y)=y^{[1]}(0)=y'(0)=0,\\
			&U_{3}(y)=y^{[2]}(0)=y''(0)-\tau_{1}(0)y'(0)=0,\\&V_{3}(y)=y^{[2]}(1)=y''(1)-\tau_1(1)y'(1)=0,\\
			&V_{4}(y)=y(1)=0.
		\end{split}
	\end{equation}
	Following the proof strategy of Theorem~\ref{thm:main}, one can easily show that $\tau_{1}=\tilde{\tau}_{1},\;\sigma_{2}=\tilde{\sigma}_{2}+c$.
\end{enumerate}
 Taking \eqref{boundary} \eqref{boundary2}  into account, we discover that the spectra $\mathfrak{S}_{12}$ and $\mathfrak{S}_{13}$  are unchanged, and  $\mathfrak{S}_{23}$ is unchanged if $\tau_2(0)=0$. Obviously, after improving the regularity, although the boundary conditions may change and the corresponding spectra are no longer the same, we can still uniquely recover the potential from the three spectra.
\begin{example}\label{e2}
Suppose that $p,q\in L_{1}(0,1)$. If $\mathfrak{S}_{12}=\tilde{\mathfrak{S}}_{12}$, $\mathfrak{S}_{13}=\tilde{\mathfrak{S}}_{13}$, $\mathfrak{S}_{23}=\tilde{\mathfrak{S}}_{23}$, then $p=\tilde{p},$ $q=\tilde{q}$.
\end{example}
\begin{enumerate}
\item From the first viewpoint, similarly to Example \ref{e1}, we have the boundary conditions \eqref{boundary},
where $\tau_1'=p,\;\tau_2''=q.$ By Theorem \ref{thm:main}, we have $\tau_{1}=\tilde{\tau}_{1},\;\tau_{2}=\tilde{\tau}_{2}+cx$, so  $p=\tilde{p},\;q=\tilde{q}$.
\item From the second viewpoint, we define the associated matrix as follows:
\begin{equation} \label{matrL1}
	\begin{split}
		F(x)=\begin{pmatrix}
			0 & 1 & 0 & 0\\ 0  & 0 & 1 & 0 \\ 0 & p & 0 & 1 \\ -q & 0 &0 & 0
		\end{pmatrix}.
	\end{split}
\end{equation}
We have the boundary conditions
\begin{equation}\label{boundary4}
	\begin{split}
		&U_{1}(y)=y^{[0]}(0)=y(0)=0,\;\;U_{2}(y)=y^{[1]}(0)=y'(0)=0,\\
		&U_{3}(y)=y^{[2]}(0)=y''(0)=0,\\&V_{3}(y)=y^{[2]}(1)=y''(1)=0,\;
		V_{4}(y)=y(1)=0.
	\end{split}
\end{equation} 
repeating the arguments of the uniqueness theorem proof for the associated matrix \eqref{matrL1}, we show that $p(x) = \tilde p(x)$ and $q(x) = \tilde q(x)$ a.e. on $(0,1)$.
\end{enumerate}

Considering \eqref{boundary} and \eqref{boundary4}, we see that  $\mathfrak{S}_{12}$ is unchanged if $\tau_1(1)=0$, $\mathfrak{S}_{13}$ is unchanged if $\tau_1(0)=\tau_1(1)=0$, and $\mathfrak{S}_{23}$ is unchanged if $\tau_2(0)=\tau_1(1)=0$. Note that the boundary conditions \eqref{boundary4} exactly coincide with the boundary conditions \eqref{four}. Therefore, our uniqueness theorem generalizes the original result of Barcilon.

\end{document}